\numberwithin{equation}{section}
\newtheorem{theorem}{Theorem}[section]
\newtheorem{lemma}{Lemma}[section]
\newtheorem{corollary}{Corollary}[section]
\theoremstyle{definition}
\newtheorem{definition}{Definition}[section]
\theoremstyle{remark}
\newtheorem{example}{Example}[section]
\author{V. V. Brovkin}
\address{Department of Differential Equations,
Faculty of Mechanics and Mathematics,
Mo\-s\-cow Lo\-mo\-no\-sov State University,
Vorobyovy Gory,
Moscow, 119992 Russia}
\email{brovvadim@rambler.ru}
\author{A. A. Kon'kov}
\address{Department of Differential Equations,
Faculty of Mechanics and Mathematics,
Mo\-s\-cow Lo\-mo\-no\-sov State University,
Vorobyovy Gory,
Moscow, 119992 Russia}
\email{konkov@mech.math.msu.su}
\title[On the existence of solutions]{
On the existence of solutions of the second boundary value problem for $p$-Laplacian on Riemannian manifolds
}
\thanks{}
\date{}
\begin{document}

\begin{abstract}
We obtain necessary and sufficient existence conditions for so\-lu\-ti\-ons of the boundary value problem
$$
	\Delta_p u = f
	\quad
	\mbox{on } M,
	\quad
	\left.
		\left|
			\nabla u
		\right|^{p - 2}
		\frac{\partial u}{\partial \nu}
	\right|_{
		\partial M
	}
	=
	h,
$$
where $p > 1$ is a real number, $M$ is a connected oriented complete Riemannian manifold with boundary,
and $\nu$ is the external normal vector to $\partial M$.
\end{abstract}

\maketitle

\section{Introduction}
\label{sec1}

Let $M$ be a connected oriented complete Riemannian manifold with boundary.
We consider the problem
\begin{equation}
	\Delta_p u = f
	\quad
	\mbox{on } M,
	\quad
	\left.
		\left|
			\nabla u
		\right|^{p - 2}
		\frac{\partial u}{\partial \nu}
	\right|_{
		\partial M
	}
	=
	h,
	\label{1.1}
\end{equation}
where
$
	\Delta_p u
	=
	\nabla_i
	(
		g^{ij}
		|\nabla u|^{p - 2}
		\nabla_j u
	),
$
$p > 1$,
is the $p$-Laplace operator,
$\nu$ is the external normal vector to $\partial M$,
and $f$ and $h$ are distributions from ${\mathcal D}' (M)$
with $\operatorname{supp} h \subset \partial M$.

As customary, by $g_{ij}$ we denote the metric tensor consistent with the Riemannian connection
and by $g^{ij}$ we denote the dual metric tensor, i.e.
$g_{ij} g^{jk} = \delta_i^k$. 
In so doing, $|\nabla u| = (g^{ij} \nabla_i u \nabla_j u)^{1/2}$.
Following~\cite{LU}, by $W_{p, loc}^1 (\omega)$, where $\omega$ is an open subset of $M$, 
we mean the space of measurable functions belonging to
$W_p^1 (\omega' \cap \omega)$ for any open set $\omega' \subset M$ with compact closure.
The space $L_{p, loc} (\omega)$ is defined analogously.

A function $u \in W_{p, loc}^1 (M)$ is called a solution of problem~\eqref{1.1} if
$$
	-
	\int_M
	g^{ij}
	|\nabla u|^{p - 2}
	\nabla_j u
	\nabla_i \varphi
	\,
	dV
	=
	(f - h, \varphi)
$$
for all $\varphi \in C_0^\infty (M)$, where $dV$ is the volume element of the manifold $M$.

As a condition at infinity, we require that solutions of~\eqref{1.1} satisfy the relation
\begin{equation}
	\int_M
	|\nabla u|^p
	\,
	dV
	<
	\infty.
	\label{1.2}
\end{equation}

Denote for brevity
\begin{equation}
	F = f - h.
	\label{1.3}
\end{equation}
In the partial case of $f \in L_{1, loc} (M)$ and $h \in L_{1, loc} (\partial M)$, we obviously have
$$
	(F, \varphi)
	=
	\int_M
	f
	\varphi
	\,
	dV
	-
	\int_{\partial M}
	h
	\varphi
	\,
	dS
$$
for all $\varphi \in C_0^\infty (M)$, 
where $dV$ is the volume element of $M$ and $dS$ is the volume element of $\partial M$.

\begin{definition}\label{d1.1}
The capacity of a compact set $K \subset \omega$ relative to an open set $\omega \subset M$ is defined by
$$
    \operatorname{cap}_p (K, \omega)
    =
    \inf_\varphi \int_\omega
    |\nabla \varphi|^p
    \,
    dx,
$$
where the infimum is taken over all functions
$\varphi \in C_0^\infty (\omega)$ that are identically equal to one in a neighborhood of $K$.
In the case of $\omega = M$, we write $\operatorname{cap}_p (K)$ instead of $\operatorname{cap}_p (K, M)$.
For an arbitrary closed set $H \subset M$, we put
$$
	\operatorname{cap}_p (H)
	=
	\sup_K
	\operatorname{cap}_p (K),
$$
where the supremum is taken over all compact sets $K \subset H$.
The capacity of the empty set is assumed to be equal to zero.
\end{definition}

In the case of $M = {\mathbb R}^n$, $n \ge 3$,
the capacity $\operatorname{cap}_2 (K)$ coincides with the well-known Wiener capacity~\cite{Landkoff}.
It can be easily shown that $\operatorname{cap}_p (K, \omega)$ has the following natural properties.

\begin{enumerate}
\item[(a)]{\it Monotonicity.}
If
$K_1 \subset K_2$ 
and
$\omega_2 \subset \omega_1$,
then
$$
	\operatorname{cap}_p (K_1,\omega_1) \le \operatorname{cap}_p (K_2,\omega_2).
$$
\item[(b)]{\it  Semi-additivity.}
If $K_1$ and $K_2$ are compact subsets of an open set $\omega$, then
$$
	\operatorname{cap}_p (K_1 \cup K_2, \omega) 
	\le
	\operatorname{cap}_p (K_1,\omega)
	+
	\operatorname{cap}_p (K_2,\omega).
$$
\end{enumerate}

\begin{definition}\label{d1.2}
Manifold $M$ is called $p$-hyperbolic, if its capacity is positive,
i.e. $\operatorname{cap}_p (M) > 0$;
otherwise this manifold is called $p$-parabolic.
\end{definition}

If $M$ is a compact manifold, it is obviously $p$-parabolic.
It can be also shown that ${\mathbb R}^n$ is a $p$-parabolic manifold for $p \ge n$
and a $p$-hyperbolic manifold for $p < n$. 

By $L_p^1 (\omega)$, where $\omega$ is an open subset of $M$,
we denote the space of distributions $u \in {\mathcal D}' (\omega)$
for which $\nabla u \in L_p (\omega)$. 
The semi norm in $L_p^1 (\omega)$ is defined as 
$$
	\| u \|_{
		L_p^1 (\omega)
	}
	=
	\left(
		\int_\omega
		|\nabla u|^p
		\,
		dV
	\right)^{1 / p}.
$$
It is known~\cite{Mazya} that ${L_p^1 (\omega) \subset L_p (K)}$ for any compact set $K \subset \omega$. 
It can be also shown that $L_p^1 (\omega) / {{<}1{>}}$ 
is a uniformly convex and therefore reflexive Banach space.
By $\stackrel{\rm \scriptscriptstyle o}{L}\!\!{}_p^1 (\omega)$ 
we denote the closure of $C_0^\infty (\omega)$ in $L_p^1 (\omega)$.
By $\stackrel{\rm \scriptscriptstyle o}{L}\!\!{}_p^1 (\omega)^*$  
we mean the dual space to
$\stackrel{\rm \scriptscriptstyle o}{L}\!\!{}_p^1 (\omega)$ 
or, in other words, the space of linear continuous functionals on
$\stackrel{\rm \scriptscriptstyle o}{L}\!\!{}_p^1 (\omega)$.
The norm of a functional $l \in {\stackrel{\rm \scriptscriptstyle o}{L}\!\!{}_p^1 (\omega)^*}$ is defined as
$$
	\| l \|_{
		\stackrel{\rm \scriptscriptstyle o}{L}{}_p^1 (\omega)^*
	}
	=
	\sup_{
		\varphi \in C_0^\infty (\omega),
		\;
		\| \varphi \|_{
			L_p^1 (\omega)
		}
		=
		1
	}
	|(l, \varphi)|.
$$

For the solvability of problem~\eqref{1.1}, \eqref{1.2} it is necessary and sufficient that
the functional $F$ defined by~\eqref{1.3} is continuous in the space
${\stackrel{\rm \scriptscriptstyle o}{L}\!\!{}_p^1 (M)}$.
Indeed, if $u$ is a solution of~\eqref{1.1}, \eqref{1.2}, then
\begin{equation}
	-
	\int_M
	g^{ij}
	|\nabla u|^{p - 2}
	\nabla_j u
	\nabla_i \varphi
	\,
	dV
	=
	(F, \varphi)
	\label{1.4}
\end{equation}
for all $\varphi \in C_0^\infty (M)$, whence in accordance with the H\"older inequality we obtain
$$
	|(F, \varphi)|
	\le
	\| u \|_{
		L_p^1 (M)
	}^{p - 1}
	\| \varphi \|_{
		L_p^1 (M)
	}.
$$
Defining $F$ by continuity to the whole space 
$\stackrel{\rm \scriptscriptstyle o}{L}\!\!{}_p^1 (M)$,
we complete the proof of necessity. 
To prove sufficiency, let us take a sequence 
$\varphi_i \in C_0^\infty (M)$, $i = 1,2,\ldots$,
such that
$$
	\lim_{i \to \infty} 
	J (\varphi_i)
	=
	\inf_{
		\varphi \in C_0^\infty (M)
	}
	J (\varphi),
$$
where
$$
	J (\varphi)
	=
	\frac{1}{p}
	\int_M
	| \nabla \varphi |^p
	\,
	dV
	+
	(F, \varphi).
$$
Since $F$ is a continuous functional in
$\stackrel{\rm \scriptscriptstyle o}{L}\!\!{}_p^1 (M)$,
the sequence $\{ \varphi_i \}_{i=1}^\infty$ is bounded 
in the semi norm of the space $L_p^1 (M)$, in particular,
$$
	\lim_{i \to \infty} 
	J (\varphi_i)
	=
	\inf_{
		\varphi \in C_0^\infty (M)
	}
	J (\varphi)
	>
	- \infty.
$$
We select from the sequence $\varphi_i + {{<}1{>}} \in {L_p^1 (M) / {{<}1{>}}}$, $i = 1,2,\ldots$,
a subsequence
$\varphi_{i_j} + {{<}1{>}}$, $j = 1,2,\ldots$,
converging weakly to some element $u + {{<}1{>}}$ of the space $L_p^1 (M) / {{<}1{>}}$.
In view of the reflexivity of $L_p^1 (M) / {{<}1{>}}$, such a sequence exists.
We denote by $R_m$ the convex hull of the set $\{ \varphi_{i_j} \}_{j \ge m}$.
By Mazur's theorem,
there is a sequence $r_m \in R_m$, $m = 1,2,\ldots$, such that
$$
	\| u - r_m \|_{
		L_p^1 (M)
	}
	\to
	0
	\quad
	\mbox{as } m \to \infty.
$$
Since $r_m \in C_0^\infty (M)$, $m = 1,2,\ldots$, this implies the inclusion
$u \in {\stackrel{\rm \scriptscriptstyle o}{L}\!\!{}_p^1 (M)}$.
We can also assert that
$$
	J (r_m)
	\le
	\sup_{j \ge m}
	J (\varphi_{i_j}),
	\quad
	m = 1,2,\ldots,
$$
as $J$ is a convex functional. 
Passing to the limit as $m \to \infty$ in the last inequality, we obtain
$$
	J (u)
	\le
	\inf_{
		\varphi \in C_0^\infty (M)
	}
	J (\varphi).
$$
Since the converse inequality is obvious, this yields
$$
	J (u)
	=
	\inf_{
		\varphi \in C_0^\infty (M)
	}
	J (\varphi),
$$
whence~\eqref{1.4} follows according to the variational principle.
Thus, $u$ is a solution of problem~\eqref{1.1}, \eqref{1.2}.

Exterior boundary value problems traditionally attract the attention of ma\-the\-ma\-ti\-ci\-ans~[1--7].
In the paper presented to your attention, we give necessary and sufficient conditions 
for the solvability of the Neumann problem on Riemannian manifolds.
These conditions are different for $p$-hyperbolic and $p$-parabolic mani\-folds.
For example, in the simple case that $M = {\mathbb R}^n \setminus B_1$, 
where $B_1$ is a unit ball in ${\mathbb R}^n$, $n \ge 2$, the exterior Neumann problem
$$
	\Delta_p u = 0
	\quad
	\mbox{on } {\mathbb R}^n \setminus B_1,
	\quad
	\left.
		\left|
			\nabla u
		\right|^{p - 2}
		\frac{\partial u}{\partial \nu}
	\right|_{
		\partial B_1
	}
	=
	h,
	\quad
	\int_{{\mathbb R}^n \setminus B_1}
	| \nabla u|^p
	\,
	dx
	<
	\infty
$$
has a solution for any function $h \in L_{p / (p - 1)} (\partial B_1)$ if $n > p$.
On the other hand, in the case of $n \le p$, for a solution to exist it is necessary and sufficient that
$$
	\int_{\partial B_1}
	h
	\,
	dS
	=
	0.
$$
In this sense, $p$-parabolic manifolds are similar to bounded domains in ${\mathbb R}^n$ 
(see Corollaries~\ref{c2.1} and~\ref{c2.2}).

\section{The case of the functional $F$ with a compact support}\label{PartialCase}

\begin{theorem}\label{t2.1}
Let $M$ be a $p$-hyperbolic manifold and the functional $F$ defined by~\eqref{1.3} have a compact support.
Then for problem~\eqref {1.1}, \eqref{1.2} to have a solution, it is necessary and sufficient that 
$F$ is a continuous functional in $\stackrel{\rm \scriptscriptstyle o}{L}\!\!{}_p^1 (\omega)$ 
for some open set $\omega$ such that $\operatorname{supp} F \subset \omega$.
\end{theorem}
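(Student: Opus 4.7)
The plan is to reduce everything, via the general equivalence established in the introduction, to the statement that $F$ is continuous on $\stackrel{\rm \scriptscriptstyle o}{L}\!\!{}_p^1(M)$. Since every $\varphi \in C_0^\infty(\omega)$ satisfies $\|\varphi\|_{L_p^1(\omega)} = \|\varphi\|_{L_p^1(M)}$, the natural inclusion $\stackrel{\rm \scriptscriptstyle o}{L}\!\!{}_p^1(\omega) \to \stackrel{\rm \scriptscriptstyle o}{L}\!\!{}_p^1(M)$ is an isometry of semi-normed spaces, so continuity on the latter automatically restricts to continuity on the former; for \emph{necessity} one simply takes $\omega = M$.

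For \emph{sufficiency}, fix a cutoff $\psi \in C_0^\infty(\omega)$ with $\psi \equiv 1$ in a neighborhood of $\supp F$, and set $K = \supp \psi$. Since $(1-\psi)\varphi$ vanishes in a neighborhood of $\supp F$, one has $(F,\varphi) = (F, \psi\varphi)$ for every $\varphi \in C_0^\infty(M)$. As $\psi\varphi \in C_0^\infty(\omega)$, the assumed continuity of $F$ on $\stackrel{\rm \scriptscriptstyle o}{L}\!\!{}_p^1(\omega)$, combined with the Leibniz rule, yields
$$
|(F,\varphi)| \le C_0 \|\nabla(\psi\varphi)\|_{L_p(\omega)} \le C_1 \|\nabla\varphi\|_{L_p(M)} + C_2 \|\varphi\|_{L_p(K)}.
$$
The whole problem thus reduces to establishing a capacitary Poincar\'e-type estimate
$$
\|\varphi\|_{L_p(K)} \le C_K \|\nabla\varphi\|_{L_p(M)}, \qquad \varphi \in C_0^\infty(M),
$$
and this is the only place where the $p$-hyperbolicity of $M$ enters.

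I expect this estimate to be the main obstacle. The natural route is by contradiction: given $\varphi_n \in C_0^\infty(M)$ with $\|\varphi_n\|_{L_p(K)} = 1$ and $\|\nabla\varphi_n\|_{L_p(M)} \to 0$, a local Poincar\'e--Wirtinger inequality on a relatively compact neighborhood of $K$ produces constants $c_n$ with $\varphi_n - c_n \to 0$ in $L_p$ there, and connectedness of $M$ propagates this, via a chain of overlapping neighborhoods, to show $\varphi_n \to c$ in $L_{p,loc}(M)$ for a single constant $c$ with $|c|\mes(K)^{1/p} = 1$; in particular $c \neq 0$. Truncating $\varphi_n/c$ by $\min(\max(\cdot,0),1)$ and regularizing yields admissible competitors in the definition of $\emk_p$ on arbitrarily large compact subsets of $M$ whose gradient energies tend to zero, contradicting $\emk_p(M) > 0$. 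Inserting this Poincar\'e estimate back into the cutoff bound gives $|(F,\varphi)| \le C \|\nabla\varphi\|_{L_p(M)}$ for every $\varphi \in C_0^\infty(M)$, i.e.\ continuity of $F$ on $\stackrel{\rm \scriptscriptstyle o}{L}\!\!{}_p^1(M)$, which by the introductory discussion is equivalent to solvability of~\eqref{1.1},~\eqref{1.2}.
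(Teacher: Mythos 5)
Your overall architecture coincides with the paper's: necessity via the trivial restriction/choice of $\omega$, sufficiency via the cutoff $\psi$ and the Leibniz rule, reducing everything to the embedding $\| \varphi \|_{L_p(K)} \le C \| \varphi \|_{L_p^1(M)}$ for $\varphi \in C_0^\infty(M)$ (this is exactly the paper's Lemma~\ref{l2.2}), and a contradiction argument for that embedding based on compactness, local Poincar\'e inequalities and $p$-hyperbolicity. The gap is in your last step. From $\varphi_n/c \to 1$ in $L_{p,loc}(M)$ and $\| \nabla \varphi_n \|_{L_p(M)} \to 0$ you cannot conclude that truncation and mollification produce admissible competitors for $\operatorname{cap}_p(H)$ for \emph{arbitrary} compact $H$: an admissible competitor must be \emph{identically} $1$ in a neighborhood of $H$, whereas $\min(\max(\varphi_n/c,0),1)$ equals $1$ only where $\varphi_n/c \ge 1$, and $L_p$-closeness to $1$ does not force this anywhere (take $\varphi_n/c = 1 - 1/n$ near $H$; regularizing does not repair this). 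Truncating instead at a level such as $3/4$ requires \emph{uniform} convergence of $\varphi_n/c$ to $1$ on $H$, which Egorov's theorem supplies only on \emph{some} compact set $\overline{E}$ of positive measure --- not on the compact set that witnesses $\operatorname{cap}_p(M) > 0$.

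What your argument actually yields is $\operatorname{cap}_p(\overline{E}) = 0$ for one compact set of positive measure. The missing ingredient is the implication ``one compact set of positive measure has zero capacity $\Rightarrow$ $M$ is $p$-parabolic'', which is precisely the paper's Lemma~\ref{l2.1}, and its proof is not soft: one constructs the capacitary potentials $u_i$ of $\overline{E}$ in an exhaustion $\omega_i$, uses $\operatorname{cap}_p(\overline{E}, \omega_i) \to 0$ together with $\operatorname{mes} \overline{E} > 0$, local H\"older estimates and the maximum principle to upgrade the $W_p^1$-convergence $u_i \to 1$ to \emph{uniform} convergence on an arbitrary relatively compact domain, and only then truncates to kill the capacity of every compact set. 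Without this step (or an equivalent potential-theoretic argument) the contradiction with $\operatorname{cap}_p(M) > 0$ is not established. The remainder of your proof --- the reduction to the embedding inequality and the use of connectedness to propagate the Poincar\'e constants --- is correct and matches the paper.
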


\begin{theorem}\label{t2.2}
Let $M$ be a $p$-parabolic manifold and the functional $F$ defined by~\eqref{1.3} have a compact support.
Then for problem~\eqref {1.1}, \eqref{1.2} to have a solution, it is necessary and sufficient that 
$F$ is a continuous functional in $\stackrel{\rm \scriptscriptstyle o}{L}\!\!{}_p^1 (\omega)$ 
for some open set $\omega$ such that $\operatorname{supp} F \subset \omega$ and, moreover,
\begin{equation}
	\lim_{s \to \infty}
	(F, \eta_s)
	=
	0
	\label{t2.2.1}
\end{equation}
for some sequence $\eta_s \in C_0^\infty (M)$ such that
\begin{equation}
	\lim_{s \to \infty}
	\| \eta_s \|_{
		L_p^1 (M)
	}
	=
	0
	\quad
	\mbox{and}
	\quad
	\left.
		\eta_s
	\right|_{
		K
	}
	=
	1,
	\quad
	s = 1,2,\ldots,
	\label{t2.2.2}
\end{equation}
where $K$ is a compact set of positive measure.
\end{theorem}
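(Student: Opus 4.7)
The plan is to handle the two implications separately. Necessity will drop out of the H\"older bound used in Section~\ref{sec1} combined with the $p$-parabolicity of $M$. Sufficiency is the deeper direction: the task is to upgrade the local bound on $\omega$, together with condition~\eqref{t2.2.1}, into a global continuity estimate $|(F, \varphi)| \leq C \|\varphi\|_{L_p^1 (M)}$ for all $\varphi \in C_0^\infty (M)$, whereupon the variational scheme of Section~\ref{sec1} delivers the solution.

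For necessity, suppose $u$ solves~\eqref{1.1}, \eqref{1.2}. The argument from Section~\ref{sec1} already shows that $F$ extends continuously to $\stackrel{\rm \scriptscriptstyle o}{L}\!\!{}_p^1 (M)$ and hence to $\stackrel{\rm \scriptscriptstyle o}{L}\!\!{}_p^1 (\omega)$ for every open $\omega \subset M$. For the sequence in~\eqref{t2.2.1}, I would pick any compact set $K \subset M$ of positive measure; $p$-parabolicity gives $\operatorname{cap}_p (K) = 0$, so there exist $\eta_s \in C_0^\infty (M)$ with $\eta_s \equiv 1$ near $K$ and $\| \eta_s \|_{L_p^1 (M)} \to 0$. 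Testing~\eqref{1.4} with $\eta_s$ and invoking H\"older yields $|(F, \eta_s)| \leq \| u \|_{L_p^1 (M)}^{p-1} \| \eta_s \|_{L_p^1 (M)} \to 0$.

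For sufficiency, fix $\chi \in C_0^\infty (\omega)$ with $\chi \equiv 1$ near $\operatorname{supp} F$ and a bounded connected Lipschitz domain $E \subset M$ containing both $K$ and $\operatorname{supp} \nabla \chi$. Because $K \subset E$ has positive measure, the Poincar\'e--Wirtinger inequality
$$
    \| \psi - (\psi)_K \|_{L_p (E)}
    \leq
    C_E \| \nabla \psi \|_{L_p (E)},
    \qquad
    (\psi)_K := \frac{1}{\operatorname{mes} K} \int_K \psi \, dV,
$$
is valid on $W_p^1 (E)$; applied to $\eta_s$, for which $(\eta_s)_K = 1$, it yields $\eta_s \to 1$ in $L_p (E)$. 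For an arbitrary $\varphi \in C_0^\infty (M)$ set $c := (\varphi)_K$ and write
$$
    (F, \varphi)
    =
    (F, \chi \varphi)
    =
    (F, \chi (\varphi - c \eta_s)) + c (F, \eta_s),
$$
using the identity $(F, \chi \eta_s) = (F, \eta_s)$, which is valid because $(1 - \chi) \eta_s$ vanishes on a neighborhood of $\operatorname{supp} F$. Applying hypothesis~(i) to $\chi (\varphi - c \eta_s) \in C_0^\infty (\omega)$, expanding the gradient by Leibniz, and estimating $\| \varphi - c \eta_s \|_{L_p (E)}$ through the triangle inequality together with the Poincar\'e bound for $\varphi - c$ condenses the whole computation to an inequality of the shape
$$
    |(F, \varphi)|
    \leq
    C_1 \| \nabla \varphi \|_{L_p (M)} + |c| \, \delta_s,
$$
where $\delta_s$ is built from $\| \nabla \eta_s \|_{L_p (M)}$, $\| \eta_s - 1 \|_{L_p (E)}$, and $|(F, \eta_s)|$, all of which tend to zero by~\eqref{t2.2.1}, \eqref{t2.2.2}. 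Since $c$ depends on $\varphi$ but not on $s$, letting $s \to \infty$ collapses the estimate to $|(F, \varphi)| \leq C_1 \| \nabla \varphi \|_{L_p (M)}$, giving continuity on $\stackrel{\rm \scriptscriptstyle o}{L}\!\!{}_p^1 (M)$ and so a solution.

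The main obstacle I expect is organizing the Leibniz expansion of $\chi (\varphi - c \eta_s)$ so that every term carrying the $\varphi$-dependent constant $|c|$ is paired with a factor vanishing in $s$. This is exactly where the positivity of $\operatorname{mes} K$ becomes indispensable: without it the Poincar\'e step producing $\eta_s \to 1$ in $L_p (E)$ would fail, and the ``constant at infinity'' contribution $c (F, \eta_s)$ could not be absorbed into $\| \nabla \varphi \|_{L_p (M)}$, showing in particular why condition~\eqref{t2.2.1} cannot be dispensed with in the $p$-parabolic setting.
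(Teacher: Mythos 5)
Your proposal is correct and follows essentially the same route as the paper: the same cutoff decomposition $(F,\varphi)=(F,\chi(\varphi-c\eta_s))+c(F,\eta_s)$ (the paper writes this as $\varphi=\varphi_j'+\varphi_j''+\alpha\eta_j$ with $(F,\varphi_j')=0$), the same Leibniz/Poincar\'e estimates of the $L_p^1(\omega)$-norm of the cutoff piece, and the same passage $s\to\infty$ to annihilate every term carrying the $\varphi$-dependent constant. The only genuine difference is technical: you obtain $\eta_s\to 1$ in $L_p(E)$ directly from the Poincar\'e inequality with mean over $K$ (exploiting $(\eta_s)_K=1$ exactly), which cleanly replaces the paper's truncation of $\eta_s$, Rellich compactness, and subsequence extraction, and makes the role of $\operatorname{mes} K>0$ more transparent.
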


\begin{corollary}\label{c2.1}
Let $M$ be a $p$-hyperbolic manifold with compact boundary and
$h$ be a functional from ${\mathcal D}' (M)$ such that $\operatorname{supp} h \subset \partial M$.
Then the problem
\begin{equation}
	\Delta_p u = 0
	\quad
	\mbox{on } M,
	\quad
	\left.
		\left|
			\nabla u
		\right|^{p - 2}
		\frac{\partial u}{\partial \nu}
	\right|_{
		\partial M
	}
	=
	h,
	\label{c2.1.1}
\end{equation}
has a solution satisfying condition~\eqref{1.2} if and only if $h$ is a continuous functional in
$\stackrel{\rm \scriptscriptstyle o}{L}\!\!{}_p^1 (\omega)$ 
for some open set $\omega$ such that $\partial M \subset \omega$.
\end{corollary}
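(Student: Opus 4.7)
My plan is to obtain Corollary~\ref{c2.1} as an immediate specialization of Theorem~\ref{t2.1}. Setting $f = 0$ in the definition~\eqref{1.3} gives $F = -h$, and since $\operatorname{supp} h \subset \partial M$ with $\partial M$ compact, the support of $F$ is automatically compact, placing us squarely inside the standing hypothesis of Theorem~\ref{t2.1}. Moreover, $\operatorname{supp} F \subset \partial M$.

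For the sufficiency (``if'') direction, I would assume $h$ is continuous on $\stackrel{\rm \scriptscriptstyle o}{L}\!\!{}_p^1 (\omega)$ for some open $\omega \supset \partial M$. Then $F = -h$ is continuous on the same space, and the inclusion $\operatorname{supp} F \subset \partial M \subset \omega$ is automatic, so Theorem~\ref{t2.1} delivers a solution of~\eqref{c2.1.1} satisfying~\eqref{1.2}.

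For the necessity (``only if'') direction, I would invoke the general observation recorded just after~\eqref{1.4} in the Introduction: the mere existence of a solution $u$ of~\eqref{1.1},~\eqref{1.2}, combined with H\"older's inequality applied to the weak formulation, yields
$$
	|(F, \varphi)|
	\le
	\| u \|_{L_p^1 (M)}^{p - 1}
	\| \varphi \|_{L_p^1 (M)}
$$
for every $\varphi \in C_0^\infty (M)$, so $F$, and hence $h$, extends to a continuous linear functional on $\stackrel{\rm \scriptscriptstyle o}{L}\!\!{}_p^1 (M)$. Because $M$ itself is an open subset of $M$ containing $\partial M$, the choice $\omega = M$ already meets the condition claimed in the corollary. (More generally, if $\varphi \in C_0^\infty(\omega)$ for any open $\omega \subset M$, then $\nabla\varphi$ is supported in $\omega$ and $\|\nabla\varphi\|_{L_p(\omega)} = \|\nabla\varphi\|_{L_p(M)}$, so continuity on $\stackrel{\rm \scriptscriptstyle o}{L}\!\!{}_p^1 (M)$ transfers to every sub-neighborhood automatically.)

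I do not anticipate a serious obstacle here; the corollary is essentially a rewording of Theorem~\ref{t2.1} in the homogeneous case $f = 0$. The only input needed beyond Theorem~\ref{t2.1} itself is the observation that compactness of $\partial M$ forces $\operatorname{supp} F$ to be compact, which is immediate from $\operatorname{supp} F = \operatorname{supp} h \subset \partial M$.
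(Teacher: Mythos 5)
Your proposal is correct and follows the same route as the paper, which simply states that Corollary~\ref{c2.1} follows immediately from Theorem~\ref{t2.1} upon taking $F = -h$ with compact support $\operatorname{supp} F \subset \partial M$. Your necessity argument (continuity on $\stackrel{\rm \scriptscriptstyle o}{L}\!\!{}_p^1 (M)$ restricts to any open $\omega$, so one may take $\omega = M$) is exactly the observation the paper makes in the proof of Theorem~\ref{t2.1} itself.
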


\begin{corollary}\label{c2.2}
Let $M$ be a $p$-parabolic manifold with compact boundary and
$h$ be a functional from ${\mathcal D}' (M)$ such that $\operatorname{supp} h \subset \partial M$.
Then for problem~\eqref{c2.1.1}, \eqref{1.2} to have a solution,
it is necessary and sufficient that $h$ is a continuous functional in
$\stackrel{\rm \scriptscriptstyle o}{L}\!\!{}_p^1 (\omega)$ 
for some open set $\omega$ such that $\partial M \subset \omega$ and, moreover,
\begin{equation}
	(h, 1) = 0.
	\label{c2.2.1}
\end{equation}
\end{corollary}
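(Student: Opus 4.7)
The plan is to derive Corollary~\ref{c2.2} from Theorem~\ref{t2.2} applied with $f = 0$, so that $F = -h$. Since $\partial M$ is compact and $\operatorname{supp} h \subset \partial M$, the functional $F$ has compact support and Theorem~\ref{t2.2} is directly applicable. A preliminary observation: the continuity condition stated in the corollary (on some open $\omega \supset \partial M$) is equivalent to the continuity condition in Theorem~\ref{t2.2} (on some open $\omega \supset \operatorname{supp} F$). This equivalence follows from compactness of $\partial M$ by a standard cutoff: multiplying any test function $\varphi \in C_0^\infty (\omega)$ by a fixed $\chi \in C_0^\infty$ that is $1$ near $\operatorname{supp} h$ and supported in the smaller open set allows one to pass between the two formulations.

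For sufficiency, suppose $h$ is continuous as stated and $(h, 1) = 0$. Since $M$ is $p$-parabolic, $\operatorname{cap}_p (K) = 0$ for every compact $K \subset M$. I would choose a compact set $K$ of positive measure whose interior contains $\partial M$ (possible because $\partial M$ is compact in the manifold with boundary) and, by definition of capacity, pick a sequence $\eta_s \in C_0^\infty (M)$ with $\eta_s \equiv 1$ on a neighborhood of $K$ and $\| \eta_s \|_{L_p^1 (M)} \to 0$. Because $\eta_s \equiv 1$ in a neighborhood of $\operatorname{supp} h \subset \partial M$, one has $(F, \eta_s) = -(h, \eta_s) = -(h, 1) = 0$ for every $s$. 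Both hypotheses of Theorem~\ref{t2.2} are thus satisfied, which yields a solution.

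For the necessity of $(h, 1) = 0$, suppose a solution $u$ of \eqref{c2.1.1} satisfying \eqref{1.2} exists. Continuity of $h$ follows from Theorem~\ref{t2.2} together with the cutoff remark above. Taking the sequence $\eta_s$ supplied by $p$-parabolicity as before and substituting $\varphi = \eta_s$ into the weak identity \eqref{1.4} gives
$$
	|(h, \eta_s)|
	=
	\left|
		\int_M
		g^{ij}
		|\nabla u|^{p - 2}
		\nabla_j u
		\nabla_i \eta_s
		\,
		dV
	\right|
	\le
	\| u \|_{L_p^1 (M)}^{p - 1}
	\| \eta_s \|_{L_p^1 (M)}
$$
by H\"older's inequality, and the right-hand side tends to zero. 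Since $\eta_s = 1$ in a neighborhood of $\operatorname{supp} h$, one has $(h, \eta_s) = (h, 1)$ for every $s$, so $(h, 1) = 0$ follows.

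The only non-routine point I anticipate is verifying the equivalence of the two formulations of the continuity condition; but this is a short cutoff argument. After that, the proof reduces to a clean application of Theorem~\ref{t2.2} combined with the capacity-zero sequence that is always available on a $p$-parabolic manifold.
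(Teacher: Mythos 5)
Your proposal is correct and follows essentially the same route as the paper: apply Theorem~\ref{t2.2} with $F = -h$, use $p$-parabolicity (i.e.\ $\operatorname{cap}_p (K) = 0$ for every compact $K$) to produce a sequence $\eta_s$ equal to one on a compact neighborhood of $\operatorname{supp} h$ with $\| \eta_s \|_{L_p^1 (M)} \to 0$, and observe that $(h, \eta_s) = (h, 1)$ for such a sequence, which makes condition~\eqref{t2.2.1} equivalent to~\eqref{c2.2.1} in both directions. The only cosmetic difference is that you verify $(h, \eta_s) \to 0$ directly from the weak identity~\eqref{1.4} via H\"older, whereas the paper invokes the already established fact that solvability implies $h \in {\stackrel{\rm \scriptscriptstyle o}{L}\!\!{}_p^1 (M)^*}$; also, the ``equivalence of continuity conditions'' you flag as the non-routine point is in fact immediate, since $\operatorname{supp} F \subset \partial M$ makes the corollary's hypothesis formally stronger in the sufficiency direction, while the necessity argument yields continuity on all of ${\stackrel{\rm \scriptscriptstyle o}{L}\!\!{}_p^1 (M)}$ and hence on every open subset.
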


Corollaries~\ref{c2.1} and~\ref{c2.2} immediately follow from Theorems~\ref{t2.1} and~\ref{t2.2}.
Assuming without loss of generality that $\overline{\omega}$ is a compact set, 
we only note that condition~\eqref{c2.2.1} implies~\eqref{t2.2.1} 
for any sequence $\eta_s \in C_0^\infty (M)$
satisfying~\eqref{t2.2.2}, where $K = \overline{\omega}$.
In the case of $p$-parabolic manifold, such a sequence obviously exists.
On the other hand, if $h \in {\stackrel{\rm \scriptscriptstyle o}{L}\!\!{}_p^1 (M)^*}$,
then~\eqref{t2.2.1} is valid for any sequence
$\eta_s \in C_0^\infty (M)$ satisfying~\eqref{t2.2.2}, where $K = \overline{\omega}$.
This in turn implies the validity of~\eqref{c2.2.1}.
 
To prove Theorems~\ref{t2.1} and~\ref{t2.2}, we need the following lemmas.

\begin{lemma}\label{l2.1}
Let $\operatorname{cap} (K) = 0$ for some compact set $K$ of positive measure.
Then $M$ is a $p$-parabolic manifold.
\end{lemma}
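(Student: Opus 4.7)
The aim is to show that every compact subset of $M$ has zero $p$-capacity. By hypothesis there is a sequence $\varphi_i \in C_0^\infty (M)$, $i = 1, 2, \ldots$, identically equal to one in a neighborhood of $K$ and with $\| \nabla \varphi_i \|_{L_p (M)} \to 0$. The plan is to upgrade this sequence, via a cut-off surgery, into an admissible family of test functions for the capacity of an arbitrary compact $K' \subset M$.

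The key preliminary step is to strengthen the gradient convergence to $L_p$-convergence: $\varphi_i \to 1$ in $L_p (S)$ for every compact $S \subset M$. Fix a connected, relatively compact, Lipschitz open set $\omega \subset M$ containing $K \cup S$. The Poincar\'e inequality on $\omega$ (equivalently, the cited embedding $L_p^1 (\omega) \subset L_p (\omega)$ modulo constants \cite{Mazya}) furnishes $c_i \in \mathbb{R}$ with $\| \varphi_i - c_i \|_{L_p (\omega)} \le C \| \nabla \varphi_i \|_{L_p (\omega)} \to 0$. Restricting to $K \subset \omega$, where $\varphi_i \equiv 1$, yields
$$
	|1 - c_i|\, \mes (K)^{1/p}
	\le
	\| \varphi_i - c_i \|_{L_p (K)}
	\to 0,
$$
so the assumption $\mes (K) > 0$ forces $c_i \to 1$. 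Hence $\varphi_i \to 1$ in $L_p (\omega)$, and in particular in $L_p (S)$. This is the only point at which the positive measure of $K$ is invoked.

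Now fix an arbitrary compact $K' \subset M$. Choose a relatively compact open $\omega \supset K \cup K'$ and a cut-off $\chi \in C_0^\infty (\omega)$ with $0 \le \chi \le 1$ and $\chi \equiv 1$ on a neighborhood of $K'$, and set $\psi_i = \varphi_i + \chi (1 - \varphi_i)$. Then $\psi_i \in C_0^\infty (M)$ and $\psi_i \equiv 1$ on $\{ \chi = 1 \}$, a neighborhood of $K'$. From $\nabla \psi_i = (1 - \chi) \nabla \varphi_i + (1 - \varphi_i) \nabla \chi$, the bound $0 \le 1 - \chi \le 1$, and the $L_\infty$-control on $|\nabla \chi|$ with $\supp \nabla \chi \subset \omega$, one obtains
$$
	\| \nabla \psi_i \|_{L_p (M)}
	\le
	\| \nabla \varphi_i \|_{L_p (M)}
	+
	\| \nabla \chi \|_{L_\infty} \| \varphi_i - 1 \|_{L_p (\supp \nabla \chi)}
	\to 0
$$
by the hypothesis and the preliminary step. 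Thus $\operatorname{cap}_p (K') = 0$ for every compact $K' \subset M$, i.e., $M$ is $p$-parabolic. The main subtlety lies in the preliminary step, where one must select $\omega$ regular enough for a Poincar\'e inequality to apply, and then use $\mes (K) > 0$ to pin down $c_i \to 1$; the subsequent surgery and its gradient bound are then routine verifications.
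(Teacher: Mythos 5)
Your proof is correct, but it follows a genuinely different route from the paper's. The paper solves the mixed boundary value problem $\Delta_p u_i = 0$ on $\omega_i \setminus K$ over an exhaustion $\{\omega_i\}$ of $M$, so that $\|u_i\|_{L_p^1(\omega_i)}^p = \operatorname{cap}_p(K,\omega_i) \to 0$; it then uses the positivity of $\operatorname{mes} K$ to identify the $W_p^1$-limit of the potentials as the constant $1$ (the same role this hypothesis plays in your preliminary step), upgrades this to \emph{uniform} convergence $u_i \to 1$ on a fixed neighborhood $\omega$ of the target compact set via H\"older estimates from~\cite{LU} and the maximum principle, and finally composes with a scalar cutoff $\eta$ (vanishing below $1/4$, equal to $1$ above $3/4$) to manufacture admissible test functions with vanishing energy. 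Your additive surgery $\psi_i = \varphi_i + \chi(1-\varphi_i)$ replaces that composition trick: since the error term is controlled by $\|\nabla\chi\|_{L_\infty}\|\varphi_i - 1\|_{L_p(\operatorname{supp}\nabla\chi)}$, you only need $L_p$-convergence of $\varphi_i$ to $1$ on compacta, which the Poincar\'e inequality plus $\operatorname{mes} K > 0$ already gives. This lets you work with an arbitrary minimizing sequence for $\operatorname{cap}_p(K)$ and dispense entirely with the existence, regularity, and maximum-principle machinery for $p$-harmonic functions, at the cost of losing the monotone capacitary potentials and the uniform convergence that the paper's construction provides (and which the paper reuses, in the guise of the same cutoff $\eta$, in the proof of Lemma~\ref{l2.2}). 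For the statement of Lemma~\ref{l2.1} itself, your argument is complete and more elementary; the only points to make explicit are that a connected Lipschitz relatively compact $\omega \supset K \cup S$ exists because $M$ is connected, and that the Poincar\'e inequality you invoke is the same one the paper uses elsewhere (e.g.\ in the proof of Theorem~\ref{t2.2}).
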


\begin{lemma}\label{l2.2}
Let $M$ be a $p$-hyperbolic manifold. Then for any compact set $K$ the space
$\stackrel{\rm \scriptscriptstyle o}{L}\!\!{}_p^1 (M)$ 
is continuously embedded in $L_p (K)$. In other words,
\begin{equation}
	\| \varphi \|_{
		L_p (K)
	}
	\le
	C
	\| \varphi \|_{
		L_p^1 (M)
	}
	\label{l2.2.1}
\end{equation}
for all $\varphi \in C_0^\infty (M)$, where the constant $C > 0$ does not depend of $\varphi$.
\end{lemma}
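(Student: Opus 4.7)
The inequality is trivial when $\mes(K) = 0$, so assume $K$ has positive measure. I argue by contradiction: if no constant $C$ satisfies~\eqref{l2.2.1}, then after normalization there is a sequence $\{\varphi_i\} \subset C_0^\infty(M)$ with $\|\varphi_i\|_{L_p(K)} = 1$ and $\|\nabla \varphi_i\|_{L_p(M)} \to 0$. The strategy is to manufacture from this data a sequence of admissible test functions for the capacity of $K$ whose $L_p$-gradient norms vanish, obtaining $\operatorname{cap}_p(K) = 0$; Lemma~\ref{l2.1} will then force $M$ to be $p$-parabolic, contradicting the hypothesis.

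Step 1 (localization and Poincar\'e--Wirtinger). Fix a precompact open set $U \subset M$ with smooth boundary containing $K$, together with an open $V$ satisfying $K \subset V \subset \overline{V} \subset U$. The Poincar\'e--Wirtinger inequality on the compact Riemannian manifold with boundary $\overline{U}$, which follows by covering $\overline{U}$ with a finite atlas of coordinate charts and applying the standard Euclidean inequality, yields
\[
	\|\varphi_i - c_i\|_{L_p(U)}
	\le
	C(U)\,\|\nabla \varphi_i\|_{L_p(U)}
	\longrightarrow 0,
	\qquad
	c_i = \frac{1}{\mes(U)}\int_U \varphi_i\,dV.
\]
Combined with $\|\varphi_i\|_{L_p(K)} = 1$ and the triangle inequality on $K \subset U$, this forces $|c_i|\,\mes(K)^{1/p} \to 1$, so $c_i$ stays bounded away from zero for $i$ large. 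Setting $\psi_i = \varphi_i / c_i \in C_0^\infty(M)$, I obtain $\psi_i \to 1$ in $L_p(U)$ together with $\|\nabla \psi_i\|_{L_p(M)} \to 0$.

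Step 2 (cutoff correction). Choose $\chi \in C_0^\infty(U)$ with $\chi \equiv 1$ on $V$ and $0 \le \chi \le 1$, and define
\[
	\zeta_i
	=
	\chi + (1 - \chi)\psi_i \in C_0^\infty(M).
\]
By construction $\zeta_i \equiv 1$ on the open neighborhood $V$ of $K$. The identity $\nabla \zeta_i = (1-\psi_i)\nabla \chi + (1-\chi)\nabla \psi_i$ gives
\[
	\|\nabla \zeta_i\|_{L_p(M)}
	\le
	\|\nabla \chi\|_{L_\infty}\,\|1 - \psi_i\|_{L_p(U)}
	+
	\|\nabla \psi_i\|_{L_p(M)}
	\longrightarrow 0,
\]
so Definition~\ref{d1.1} yields $\operatorname{cap}_p(K) \le \|\nabla \zeta_i\|_{L_p(M)}^p \to 0$. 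Since $\mes(K) > 0$, Lemma~\ref{l2.1} then makes $M$ $p$-parabolic, contradicting the hypothesis.

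The main obstacle is the transition from $L_p$-closeness of $\psi_i$ to $1$ on $K$ to a genuine pointwise identity on an open neighborhood of $K$, which is what the capacity definition demands. Combining $\psi_i$ with a fixed smooth bump $\chi$ that already equals $1$ near $K$ in the affine fashion above resolves this cleanly: the piece of $\zeta_i$ that must compensate for $\psi_i \neq 1$ picks up the factor $(1-\psi_i)$, which is small in $L_p(U)$, while the part that retains the gradient of $\psi_i$ is multiplied by $(1-\chi)$ and contributes no more than $\|\nabla \psi_i\|_{L_p(M)}$. The remaining ingredients, namely the local Poincar\'e--Wirtinger inequality used to extract a nonzero limiting constant, are entirely standard.
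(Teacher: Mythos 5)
Your argument is correct and ends where the paper's does --- a contradiction with Lemma~\ref{l2.1} via a compact set of positive measure and zero capacity --- but the way you manufacture the admissible capacity test functions is genuinely different. The paper normalizes $\| \varphi_i \|_{L_p(\omega)}$ rather than $\| \varphi_i \|_{L_p(K)}$, uses the compact embedding $W_p^1(\omega) \hookrightarrow L_p(\omega)$ to extract an a.e.\ convergent subsequence, upgrades this to uniform convergence on a compact set $\overline{E} \subset \omega$ of positive measure via Egorov's theorem, and then composes with a cutoff $\eta$ acting in the \emph{range} ($\eta = 0$ on $(-\infty, 1/4]$, $\eta = 1$ on $[3/4, \infty)$), so that $\eta \circ \varphi_i$ is identically one near $\overline{E}$ and $\operatorname{cap}_p(\overline{E}) = 0$; note the zero-capacity set produced there is the auxiliary $\overline{E}$, not $K$ itself. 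You instead use the Poincar\'e--Wirtinger inequality to show convergence to a nonzero constant in $L_p(U)$, rescale, and repair the pointwise condition by the affine interpolation $\zeta_i = \chi + (1-\chi)\psi_i$ with a fixed bump equal to one near $K$. This trades Rellich compactness and Egorov for a Poincar\'e inequality and an elementary product-rule estimate, and it yields $\operatorname{cap}_p(K) = 0$ directly, which is marginally cleaner (either conclusion feeds Lemma~\ref{l2.1} equally well). Two small points to tighten: choose $U$ connected (possible since $M$ is connected), because Poincar\'e--Wirtinger with a single mean value fails on disconnected domains; and the reduction of the manifold Poincar\'e inequality to the Euclidean one requires a chaining argument across charts rather than a bare finite atlas, though the inequality itself is standard for connected precompact Lipschitz domains, which is all you need.
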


In the case of $p = 2$, Lemmas~\ref{l2.1} and~\ref{l2.2} are proved in~\cite[Chapter~3, \S 1]{KonkovPhD}.
For $p > 1$, they are proved in a similar way.
For the convenience of readers, we give this proof in full.

\begin{proof}[Proof of Lemma~$\ref{l2.1}$]
Let $H$ be a compact set containing $K$ and $\omega$ be a domain with compact closure such that
$H \subset \omega$.
We represent $M$ as a union of Lipschitz domains $\omega_i$ with compact closures such that 
$\overline{\omega} \subset \omega_i \subset \omega_{i+1}$, $i = 1,2,\ldots$.
Let us denote by $u_i$ the solution of the problem
$$
	\Delta_p u_i = 0
	\quad
	\mbox{on } \omega_i \setminus K,
	\quad
	\left.
		u_i
	\right|_K
	=
	1,
	\quad
	\left.
		u_i
	\right|_{\partial \omega_i}
	=
	0,
	\quad
	\left.
		\left|
			\nabla u
		\right|^{p - 2}
		\frac{\partial u}{\partial \nu}
	\right|_{
		\omega_i \cap \partial M
	}
	=
	0.
$$
By the maximum principle, we have $0 \le u_i (x) \le u_{i+1} (x) \le 1$ for all $x \in \omega_i$, $i = 1,2,\ldots$.
At the same time,
$$
	\| u_i \|_{
		L_p^1 (\omega_i)
	}
	=
	\operatorname{cap}_p (K, \omega_i)
	\to
	0
	\quad
	\mbox{as } i \to \infty.
$$
Hence, $\{ u_i \}_{i=1}^\infty$ is a fundamental sequence in $W_p^1 (\omega)$.
Since $\operatorname{mes} K > 0$, we obviously obtain $u_i \to 1$ in $W_p^1 (\omega)$ as $i \to \infty$.
It is known~\cite{LU} that the sequence $\{ u_i \}_{i=1}^\infty$ is bounded in a H\"older norm 
in some neighborhood of the set $\partial \omega$; therefore, it has a subsequence
converging to one uniformly on $\partial \omega$.
By the maximum principle, on the set $\omega$ the function $u_i$ does not exceed one and is not less
than the exact lower bound of this function on $\partial \omega$.
Thus, $u_i \to 1$ uniformly on $\omega$ as $i \to \infty$.
Further, let us take a function $\eta \in C^\infty ({\mathbb R})$
which is equal to zero on $(- \infty, 1 / 4]$ and is equal to one on $[3 / 4, \infty)$.
It is easy to see that $\eta \circ u_i \in \stackrel{\rm \scriptscriptstyle o}{W}\!\!{}_p^1 (\omega_i)$ 
and, moreover, $\eta \circ u_i = 1$ on $\omega$ for all enough large~$i$. 
Since
$$
	\|\eta \circ u_i \|_{
		L_p^1 (\omega_i)
	}
	\le
	\|\eta' \|_{
		C ({\mathbb R})
	}
	\|u_i \|_{
		L_p^1 (\omega_i)
	}
	\to
	0
	\quad
	\mbox{as } i \to \infty,
$$
this allows us to assert that
$$
	\operatorname{cap}_p (H)
	\le
	\lim_{i \to \infty}
	\|\eta \circ u_i \|_{
		L_p^1 (\omega_i)
	}
	=
	0.
$$
\end{proof}

\begin{proof}[Proof of Lemma~$\ref{l2.2}$]
We take a Lipschitz domain $\omega$ with compact closure such that $K \subset \omega$.
Assume to the contrary that there is a sequence of functions $\varphi_i \in C_0^\infty (M)$,
satisfying the conditions
\begin{equation}
	\lim_{i \to \infty}
	\| \varphi_i \|_{
		L_p^1 (M)
	}
	=
	0
	\quad
	\mbox{and}
	\quad
	\| \varphi_i \|_{
		L_p (\omega)
	}
	=
	\operatorname{mes} \omega > 0,
	\quad
	i = 1,2,\ldots.
	\label{pl2.2.1}
\end{equation}
Since $W_p^1 (\omega)$ is completely continuously embedded in $L_p (\omega)$, there is a subsequence
of the sequence $\{ \varphi_i \}_{j = 1}^\infty$ converging in $L_p (\omega)$.
In order not to clutter up indices, we denote this subsequence also by
$\{ \varphi_i \}_{i = 1}^\infty$.
In view of~\eqref{pl2.2.1}, we have $\varphi_i \to 1$ in $W_p^1 (\omega)$ as $i \to \infty$;
therefore, some subsequence of this sequence which we again denote by $\{ \varphi_i \}_{i = 1}^\infty$ 
converges to one almost everywhere on $\omega$.
According to Egorov's theorem, there is a set $E \subset \omega$ of positive measure such that
$\{ \varphi_i \}_{i = 1}^\infty$ tends to one uniformly on $E$.
Since $\varphi_i$ are continuous functions, the sequence $\{ \varphi_i \}_{i = 1}^\infty$ 
also tends to one uniformly on the compact set $\overline{E}$.
Thus, taking the function $\eta$ the same as in the proof of the Lemma~\ref{l2.1}, we have
$$
	\operatorname{cap}_p (\overline{E})
	\le
	\lim_{i \to \infty}
	\|\eta \circ \varphi_i \|_{
		L_p^1 (M)
	}
	=
	0,
$$
whence in accordance with Lemma~\ref{l2.1} it follows that $M$ is a $p$-parabolic manifold.
This contradiction proves the inequality
$$
	\| \varphi \|_{
		L_p (\omega)
	}
	\le
	C
	\| \varphi \|_{
		L_p^1 (M)
	}
$$
for all $\varphi \in C_0^\infty (M)$, where the constants $C > 0$ does not depend of $\varphi$,
from which~\eqref{l2.2.1} follows at once.
\end{proof}

\begin{proof}[Proof of Theorem~$\ref{t2.1}$]
The necessity is obvious. 
Indeed, from the continuity of the functional $F$ in the space
$\stackrel{\rm \scriptscriptstyle o}{L}\!\!{}_p^1 (M)$, 
it follows that $F$ is also continuous in
$\stackrel{\rm \scriptscriptstyle o}{L}\!\!{}_p^1 (\omega)$ 
for any open subset $\omega$ of the manifolds $M$.
We prove the sufficiency.
Assume that $F \in {\stackrel{\rm \scriptscriptstyle o}{L}\!\!{}_p^1 (\omega)^*}$ 
for some open set $\omega$ with
$\operatorname{supp} F \subset \omega$.
Let us show that $F \in {\stackrel{\rm \scriptscriptstyle o}{L}\!\!{}_p^1 (M)^*}$.
Take a function $\psi \in C_0^\infty (\omega)$ equal to one in a neighborhood of $\operatorname{supp} F$.
It is easy to see that
$$
	|(F, \psi \varphi)|
	\le
	\| F \|_{
		\stackrel{\rm \scriptscriptstyle o}{L}{}_p^1 (\omega)^*
	}
	\| \psi \varphi \|_{
		L_p^1 (\omega)
	}
$$
for all $\varphi \in C_0^\infty (M)$, whence in accordance with the fact that
$(F, \varphi) = (F, \psi \varphi)$
and
$$
	\| \psi \varphi \|_{
		L_p^1 (\omega)
	}
	\le
	\| \psi\|_{
		C (\omega)
	}
	\|\varphi \|_{
		L_p^1 (\omega)
	}
	+
	\|\nabla \psi\|_{
		C (\omega)
	}
	\|\varphi \|_{
		L_p (\operatorname{supp}  \psi)
	}
$$
we obtain
$$
	|(F, \varphi)|
	\le
	\| F \|_{
		\stackrel{\rm \scriptscriptstyle o}{L}{}_p^1 (\omega)^*
	}
	\left(
		\| \psi\|_{
			C (\omega)
		}
		\|\varphi \|_{
			L_p^1 (\omega)
		}
		+
		\|\nabla \psi\|_{
			C (\omega)
		}
		\|\varphi \|_{
			L_p (\operatorname{supp} \psi)
		}
	\right)
$$
for all $\varphi \in C_0^\infty (M)$. 
At the same time, Lemma~\ref{l2.2} implies that
$$
	\|\varphi \|_{
		L_p (\operatorname{supp}  \psi)
	}
	\le
	C
	\| \varphi \|_{
		L_p^1 (M)
	}
$$
for all $\varphi \in C_0^\infty (M)$, where the constant $C > 0$ does not depend of $\varphi$.

Thus, to complete the proof, it remains to combine the last two estimates.
\end{proof}

\begin{proof}[Proof of Theorem~$\ref{t2.2}$]
As in the case of Theorem~\ref{t2.1}, we need only to prove the sufficiency as the necessity is obvious.
Let $\Omega$ be a Lipschitz domain with compact closure containing $K$ and $\operatorname{supp} F$.
Without loss of generality, it can be assumed that the norms
$
	\| \eta_s \|_{
		W_p^1 (\Omega)
	}
$
are bounded by a constant independent of $s$.
If the last condition is not valid, we replace $\eta_s$ with
\begin{equation}
	\tilde \eta_s (x)
	=
	\left\{
		\begin{aligned}
			&
			0,
			&
			&
			\eta_s (x) \le 0,
			\\
			&
			\eta_s (x),
			&
			&
			0 < \eta_s (x) < 1,
			\\
			&
			1,
			&
			&
			1 \le \eta_s (x),
		\end{aligned}
	\right.
	\quad
	s = 1,2,\dots.
	\label{pt2.2.1}
\end{equation}
Since $W_p^1 (\Omega)$ is completely continuous embedded in $L_p (\Omega)$,
there exists a sub\-sequence of the sequence $\{ \eta_s \}_{s = 1}^\infty$
converging in ${L_p (\Omega)}$. 
Denote this subsequence also by $\{ \eta_s \}_{s = 1}^\infty$.
In view of~\eqref{t2.2.2}, we obtain
\begin{equation}
	\| 1 - \eta_s \|_{
		W_p^1 (\Omega)
	}
	\to
	0
	\quad
	\mbox{as } s \to \infty.
	\label{pt2.2.2}
\end{equation}

Assume that $\varphi \in {C_0^\infty (M)}$.
By the Poincare inequality,
\begin{equation}
	\int_\Omega
	|\varphi - \alpha|^p
	\,
	dV
	\le
	C
	\int_\Omega
	|\nabla \varphi|^p
	\,
	dV,
	\label{pt2.2.3}
\end{equation}
where
\begin{equation}
	\alpha
	=
	\frac{
		1
	}{
		\operatorname{mes} \Omega
	}
	\int_\Omega
	\varphi
	\,
	dV.
	\label{pt2.2.4}
\end{equation}
Hereinafter in the proof of Theorem~\ref{t2.2}, 
by $C$ we mean various positive constants depending only on 
$p$, $\omega$, $\Omega$, and the support of the functional $F$.
Take a function $\psi \in C_0^\infty (\omega \cap \Omega)$ equal to one 
in a neighborhood of $\operatorname{supp} F$. 
Let us denote
\begin{equation}
	\varphi_j' 
	= 
	(\varphi - \alpha \eta_j)
	(1 - \psi)
	\quad
	\mbox{and}
	\quad
	\varphi_j''
	=
	(\varphi - \alpha \eta_j)
	\psi,
	\quad
	j = 1,2,\ldots.
	\label{pt2.2.5}
\end{equation}
We have
$\varphi = \varphi_j' + \varphi_j'' + \alpha \eta_j$;
therefore,
\begin{equation}
	|(F, \varphi)|
	\le
	|(F, \varphi_j')|
	+
	|(F, \varphi_j'')|
	+
	|\alpha| |(F, \eta_j)|,
	\quad
	j = 1,2,\ldots,
	\label{pt2.2.6}
\end{equation}
Since $(F, \varphi_j') = 0$ and $\varphi_j'' \in C_0^\infty (\omega)$, this obviously implies the estimate
$$
	|(F, \varphi)|
	\le
	\| F \|_{
		\stackrel{\rm \scriptscriptstyle o}{L}{}_p^1 (\omega)^*
	} 
	\| \varphi_j'' \|_{
		L_p^1 (\omega)
	}
	+
	|\alpha| |(F, \eta_j)|,
	\quad
	j = 1,2,\ldots.
$$
Combining it with the inequality
$$
	\| \varphi_j'' \|_{
		L_p^1 (\omega)
	}
	\le
	\| \psi \|_{
		C (\Omega)
	}
	\| \varphi - \alpha \eta_j \|_{
		L_p^1 (\Omega)
	}
	+
	\|\nabla \psi \|_{
		C (\Omega)
	}
	\| \varphi - \alpha \eta_j \|_{
		L_p (\Omega)
	},
$$
we obtain
\begin{align*}
	|(F, \varphi)|
	\le
	{}
	&
	C
	\| F \|_{
		\stackrel{\rm \scriptscriptstyle o}{L}{}_p^1 (\omega)^*
	} 
	\left(
		\| \varphi - \alpha \eta_j \|_{
			L_p^1 (\Omega)
		}
		+
		\| \varphi - \alpha \eta_j \|_{
			L_p (\Omega)
		}
	\right)
	\\
	&
	{}
	+
	|\alpha| |(F, \eta_j)|,
	\quad
	j = 1,2,\ldots.
\end{align*}
Passing to the limit in the last formula as $j \to \infty$ with account of the relations
\begin{equation}
	\| \varphi - \alpha \eta_j \|_{
		L_p^1 (M)
	}
	\le
	\| \varphi \|_{
		L_p^1 (M)
	}
	+
	|\alpha|
	\| \eta_j \|_{
		L_p^1 (M)
	}
	\to
	\| \varphi \|_{
		L_p^1 (M)
	}
	\quad
	\mbox{as } j \to \infty
	\label{pt2.2.7}
\end{equation}
and
\begin{align}
	&
	\| \varphi - \alpha \eta_j \|_{
		L_p (\Omega)
	}
	=
	\| \varphi - \alpha + \alpha (1 - \eta_j) \|_{
		L_p (\Omega)
	}
	\le
	\| \varphi - \alpha \|_{
		L_p (\Omega)
	}
	+
	|\alpha|
	\| 1 - \eta_j \|_{
		L_p (\Omega)
	}
	\nonumber
	\\
	&
	\qquad
	{}
	\le
	C
	\| \varphi \|_{
		L_p^1 (\Omega)
	}
	+
	|\alpha|
	\| 1 - \eta_j \|_{
		L_p (\Omega)
	}
	\to
	C
	\| \varphi \|_{
		L_p^1 (\Omega)
	}
	\quad
	\mbox{as } j \to \infty,
	\label{pt2.2.8}
\end{align}
we have
$$
	|(F, \varphi)|
	\le
	C
	\| F \|_{
		\stackrel{\rm \scriptscriptstyle o}{L}{}_p^1 (\omega)^*
	} 
	\| \varphi \|_{
		L_p^1 (M)
	}.
$$
The proof is completed.
\end{proof}

\section{The case of the general functional $F$}\label{GeneralCase}

We assume that the manifold $M$ admits a locally finite cover
\begin{equation}
	M
	=
	\bigcup_{i = 1}^\infty
	\Omega_i
	\label{3.1}
\end{equation}
of the multiplicity $k < \infty$,
where $\Omega_i$ are Lipschitz domains with compact closure such that
$\Omega_i \cap \Omega_{i + 1} \ne \emptyset$, $i = 1,2,\dots$. 
In so doing, let $\gamma : M \to (0, \infty)$ be a measurable function
separated from zero and infinity on every compact subset of the manifold $M$ and
$\psi_i \in C_0^\infty (\Omega_i)$ be a partition of unity on $M$ such that
\begin{equation}
	|\nabla \psi_i (x)|^p
	\le
	\gamma (x),
	\quad
	x \in \Omega_i,
	\quad
	i = 1,2,\dots.
	\label{3.5}
\end{equation}

We need the following well-known assertion.

\begin{lemma}[Poincare inequality]\label{l3.1}
Let $\omega$ be a Lipschitz domain with compact closure.
Then
\begin{equation}
	\int_\omega
	\gamma 
	|u - \bar u|
	\,
	dV
	\le
	C
	\int_\omega
	\gamma^{1 - 1 / p}
	|\nabla u|
	\,
	dV
	\label{l3.1.1}
\end{equation}
for any function $u \in W_1^1 (\omega)$, where 
$$
	\bar u
	=
	\frac{
		\int_\omega
		u
		\,
		dV
	}{
		\int_\omega
		\gamma 
		\,
		dV
	}
$$
and the constant $C > 0$ does not depend on $u$.
\end{lemma}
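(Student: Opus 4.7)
The plan is to reduce the weighted inequality~\eqref{l3.1.1} to the classical (unweighted) Poincar\'e inequality on the Lipschitz domain $\omega$, exploiting the hypothesis that $\gamma$ is measurable and separated from zero and infinity on every compact subset of $M$. Since $\overline{\omega}$ is compact, this assumption immediately produces constants $0 < c_1 \le c_2 < \infty$ with $c_1 \le \gamma(x) \le c_2$ for a.e.\ $x \in \omega$, so on the fixed domain $\omega$ the weight $\gamma$ is comparable to a positive constant.

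Concretely, I would first invoke the classical Poincar\'e inequality for Lipschitz domains,
$$
\int_\omega |u - u_\omega| \, dV \le C_0 \int_\omega |\nabla u| \, dV,
\qquad
u_\omega = \frac{1}{|\omega|} \int_\omega u \, dV,
$$
which is a standard consequence of the compactness of the embedding $W_1^1 (\omega) \hookrightarrow L_1 (\omega)$. Next I would compare $\bar u$ to the ordinary mean $u_\omega$. Reading $\bar u$ as the $\gamma$-weighted mean $\bar u = \int_\omega \gamma u \, dV / \int_\omega \gamma \, dV$ (the printed formula in the lemma appears to be missing a factor of $\gamma$ in the numerator; with the printed definition the inequality would already fail for any nonzero constant $u$ with non-constant $\gamma$), the identity $\bar u - u_\omega = (\int_\omega \gamma \, dV)^{-1} \int_\omega \gamma (u - u_\omega) \, dV$ together with the bounds on $\gamma$ gives
$$
|\bar u - u_\omega| \le \frac{c_2}{c_1 |\omega|} \int_\omega |u - u_\omega| \, dV,
$$
and hence $\int_\omega |u - \bar u| \, dV \le (1 + c_2/c_1) \int_\omega |u - u_\omega| \, dV$.

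Finally I would chain the estimates: bound $\gamma \le c_2$ on the left-hand side of~\eqref{l3.1.1}, apply the previous step to pass from $\int_\omega |u - \bar u| \, dV$ to $\int_\omega |u - u_\omega| \, dV$, apply the classical Poincar\'e inequality to pass to $\int_\omega |\nabla u| \, dV$, and use $\gamma^{1 - 1/p} \ge c_1^{1 - 1/p} > 0$ (valid since $p > 1$) to reinstate the weight $\gamma^{1 - 1/p}$ on the right-hand side. The final constant $C$ then depends only on $p$, $c_1$, $c_2$, and the classical Poincar\'e constant of $\omega$.

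The only genuine obstacle here is interpreting $\bar u$ correctly; once that is settled, the argument is a routine reduction via constants, and no weighted Poincar\'e machinery (Whitney decomposition, Muckenhoupt weights, and the like) is needed, precisely because the weight $\gamma$ is uniformly bounded above and below on the compact set $\overline{\omega}$.
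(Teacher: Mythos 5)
Your proof is correct, but there is nothing in the paper to compare it against: the authors state Lemma~\ref{l3.1} as a ``well-known assertion'' and give no proof at all, so your write-up has to stand on its own. It does. The two observations that carry the argument are both sound: (i) the printed formula for $\bar u$ cannot be right as it stands, since for $u \equiv 1$ the left-hand side of \eqref{l3.1.1} equals $\left| 1 - \mes \omega / \int_\omega \gamma \, dV \right| \int_\omega \gamma \, dV$ while the right-hand side vanishes (so the failure occurs whenever $\int_\omega \gamma \, dV \ne \mes \omega$, not only for non-constant $\gamma$ as you say --- a harmless slip); the intended quantity is the $\gamma$-weighted mean, and your identity $\bar u - u_\omega = \bigl(\int_\omega \gamma \, dV\bigr)^{-1} \int_\omega \gamma (u - u_\omega)\, dV$ handles it correctly; (ii) since $\overline{\omega}$ is compact and $\gamma$ is by hypothesis separated from zero and infinity on compact sets, the weight is trapped between constants $c_1, c_2$ on $\omega$, which reduces everything to the classical $(1,1)$-Poincar\'e inequality on a bounded Lipschitz domain. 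The resulting constant depends on $c_1$, $c_2$, $p$ and the Poincar\'e constant of $\omega$, which is all the lemma requires ($C$ independent of $u$); note that this dependence on $\gamma$ is not lost downstream, because the hypotheses \eqref{3.2} and \eqref{3.3} explicitly carry the constants $C_i$ for $\omega = \Omega_i \cup \Omega_{i+1}$. The only implicit assumption worth flagging is connectedness of $\omega$, which is standard for the word ``domain'' and is needed for the unweighted Poincar\'e inequality you invoke.
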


We also assume that
\begin{equation}
	\sup_{
		i \in {\mathbb N}
	}
	C_i
	\left(
		\frac{
			1
		}{
			\int_{\Omega_i}
			\gamma
			\,
			dV
		}
		+
		\frac{
			1
		}{
			\int_{\Omega_{i + 1}}
			\gamma
			\,
			dV
		}
	\right)
	\sum_{j = 1}^i
	\int_{\Omega_j}
	\gamma
	\,
	dV
	<
	\infty
	\label{3.2}
\end{equation}
in the case of the $p$-hyperbolic manifold $M$ and
\begin{equation}
	\sup_{
		i \in {\mathbb N}
	}
	C_i
	\left(
		\frac{
			1
		}{
			\int_{\Omega_i}
			\gamma
			\,
			dV
		}
		+
		\frac{
			1
		}{
			\int_{\Omega_{i + 1}}
			\gamma
			\,
			dV
		}
	\right)
	\sum_{j = i + 1}^\infty
	\int_{\Omega_j}
	\gamma
	\,
	dV
	<
	\infty
	\label{3.3}
\end{equation}
in the case of the $p$-parabolic manifold $M$,
where $\mathbb N$ is the set of positive integers and
$C_i > 0$ is the constant in~\eqref{l3.1.1} for 
$\omega = \Omega_i \cup \Omega_{i + 1}$.

\begin{theorem}\label{t3.1}
Let $M$ be a $p$-hyperbolic manifold.
Then for problem~\eqref {1.1}, \eqref{1.2} to have a solution, it is necessary and sufficient that 
\begin{equation}
	\sum_{i = 1}^\infty
	\| F \|_{
		\stackrel{\rm \scriptscriptstyle o}{L}{}_p^1 (\Omega_i)^*
	}^{p / (p - 1)}
	<
	\infty,
	\label{t3.1.1}
\end{equation}
where $F$ is defined by~\eqref{1.3}. 
\end{theorem}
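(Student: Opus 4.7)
The plan is to use the equivalence established in Section~\ref{sec1}: problem~\eqref{1.1}, \eqref{1.2} is solvable if and only if $F$ extends to a continuous linear functional on $\stackrel{\rm \scriptscriptstyle o}{L}\!\!{}_p^1 (M)$. I will thus prove that this continuity is equivalent to~\eqref{t3.1.1}, writing $a_i = \| F \|_{\stackrel{\rm \scriptscriptstyle o}{L}{}_p^1 (\Omega_i)^*}$, $A_i = \int_{\Omega_i} \gamma \, dV$, and $p' = p/(p-1)$ throughout.

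For necessity, assume $F$ is continuous on $\stackrel{\rm \scriptscriptstyle o}{L}\!\!{}_p^1 (M)$. For each $i$ I will pick $\varphi_i \in C_0^\infty(\Omega_i)$ with $\|\varphi_i\|_{L_p^1(\Omega_i)} = 1$ and, after a sign flip if necessary, $(F,\varphi_i) \ge (1-\varepsilon) a_i$. Forming $\varphi_N = \sum_{i=1}^N a_i^{1/(p-1)} \varphi_i$, the elementary multiplicity-$k$ bound $|\sum_j b_j|^p \le k^{p-1} \sum_j |b_j|^p$ applied pointwise to $\nabla \varphi_N$ gives $\|\varphi_N\|_{L_p^1(M)}^p \le k^{p-1} \sum_{i=1}^N a_i^{p'}$, while by construction $(F,\varphi_N) \ge (1-\varepsilon) \sum_{i=1}^N a_i^{p'}$. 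Continuity of $F$ then forces a uniform bound on the partial sums, yielding~\eqref{t3.1.1}.

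For sufficiency, assume~\eqref{t3.1.1}. Given $\varphi \in C_0^\infty(M)$, I decompose $\varphi = \sum_i \psi_i \varphi$ with $\psi_i \varphi \in C_0^\infty(\Omega_i)$, so that $|(F,\varphi)| \le \sum_i a_i \|\psi_i \varphi\|_{L_p^1(\Omega_i)}$. The product rule together with~\eqref{3.5} yields $\|\psi_i \varphi\|_{L_p^1(\Omega_i)}^p \le 2^{p-1}\bigl(\int_{\Omega_i} |\nabla \varphi|^p\,dV + \int_{\Omega_i} \gamma |\varphi|^p\,dV\bigr)$. Applying H\"older's inequality with exponents $p'$ and $p$ to the $i$-sum and invoking the multiplicity $k$ of the cover reduces the matter to the weighted Hardy-type inequality
$$
\int_M \gamma |\varphi|^p \, dV \le C \|\varphi\|_{L_p^1(M)}^p \qquad \text{for all } \varphi \in C_0^\infty(M). \qquad (\ast)
$$

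The hard part will be $(\ast)$, and this is where condition~\eqref{3.2} enters. Let $\alpha_i$ denote the weighted mean of $\varphi$ on $\Omega_i$. Applying Lemma~\ref{l3.1} on the pairs $\omega_i = \Omega_i \cup \Omega_{i+1}$, together with the H\"older bound $\int_{\omega_i} \gamma^{1-1/p} |\nabla \varphi|\,dV \le (A_i+A_{i+1})^{1/p'} \|\nabla\varphi\|_{L_p(\omega_i)}$, yields
$$
|\alpha_i - \alpha_{i+1}| \le C_i \Bigl(\tfrac{1}{A_i}+\tfrac{1}{A_{i+1}}\Bigr)(A_i+A_{i+1})^{1/p'}\|\nabla\varphi\|_{L_p(\omega_i)}.
$$
Because $\varphi$ has compact support and the cover is locally finite, $\alpha_i = 0$ for all sufficiently large $i$, so telescoping forward gives $|\alpha_i| \le \sum_{j \ge i} |\alpha_j - \alpha_{j+1}|$. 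Splitting $|\varphi|^p \le 2^{p-1}(|\varphi - \alpha_i|^p + |\alpha_i|^p)$ on each $\Omega_i$ and integrating against $\gamma$, the first piece is controlled by applying Lemma~\ref{l3.1} to $|\varphi|^p$ combined with H\"older, while the second becomes a discrete dual Hardy sum of the form $\sum_n A_n \bigl(\sum_{j \ge n} |\alpha_j - \alpha_{j+1}|\bigr)^p$. Condition~\eqref{3.2} is precisely the Muckenhoupt-type criterion under which this sum is dominated by $C \sum_j \|\nabla\varphi\|_{L_p(\omega_j)}^p \le Ck\|\nabla\varphi\|_{L_p(M)}^p$, which closes the argument for $(\ast)$ and completes the proof.
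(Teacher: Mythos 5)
Your necessity argument coincides with the paper's (test functions $\varphi_i\in C_0^\infty(\Omega_i)$ nearly attaining the local norms, a weighted partial sum, and the multiplicity-$k$ bound on its gradient), and your reduction of sufficiency to the weighted inequality $(\ast)$ via the partition of unity, the product rule with~\eqref{3.5}, and H\"older in $i$ is also exactly the paper's route: $(\ast)$ is Lemma~\ref{l3.3}. The gap is in your proof of $(\ast)$. Writing $A_i=\int_{\Omega_i}\gamma\,dV$ and $d_j=C_j(A_j^{-1}+A_{j+1}^{-1})(A_j+A_{j+1})^{(p-1)/p}$, you bound $|\alpha_j-\alpha_{j+1}|\le d_j t_j$ with $t_j=\|\nabla\varphi\|_{L_p(\Omega_j\cup\Omega_{j+1})}$ and then require the \emph{unconstrained} dual Hardy inequality $\sum_n A_n\bigl(\sum_{j\ge n}d_jt_j\bigr)^p\le C\sum_j t_j^p$ for all $t_j\ge 0$, asserting that~\eqref{3.2} is precisely its Muckenhoupt criterion. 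It is not: that criterion is $\sup_N\bigl(\sum_{n\le N}A_n\bigr)\bigl(\sum_{j\ge N}d_j^{p/(p-1)}\bigr)^{p-1}<\infty$, which involves a tail sum of $d_j^{p/(p-1)}$, whereas~\eqref{3.2} is the pointwise condition $\sup_i C_i(A_i^{-1}+A_{i+1}^{-1})\sum_{j\le i}A_j<\infty$. Concretely, with $A_1=1$ and $A_j=T$ for $j\ge 2$, condition~\eqref{3.2} at $i=1$ only forces $C_1\lesssim 1$, while $d_1^p\approx C_1^pT^{p-1}$; the test sequence $t=(1,0,0,\dots)$ then violates your Hardy inequality for large $T$ even though~\eqref{3.2} holds with a fixed constant. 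The target $(\ast)$ is still true there, but your derivation has discarded the coupling between $\alpha_j-\alpha_{j+1}$ and $\nabla\varphi$ and cannot recover it. A second, smaller problem: $\int_{\Omega_i}\gamma|\varphi-\alpha_i|^p\,dV$ is an $L_p$-oscillation, and Lemma~\ref{l3.1} applied to $u=|\varphi|^p$ controls only the weighted $L_1$-oscillation of $|\varphi|^p$ about its own mean; the $(p,p)$-Poincar\'e constant you would need is not the $C_i$ appearing in~\eqref{3.2}.

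The paper avoids Hardy inequalities entirely. In Lemma~\ref{l3.3} it performs an Abel summation, $\int_M\gamma|\varphi|^p\,dV\le\sum_i S_i(b_i-b_{i+1})$ with $b_i=\gamma_i^{-1}\int_{\Omega_i}\gamma|\varphi|^p\,dV$ and $S_i=\sum_{j\le i}\gamma_j$, bounds $b_i-b_{i+1}$ by Lemma~\ref{l3.2} applied to $u=|\varphi|^p$ (which produces the mixed term $\int\gamma^{1-1/p}|\varphi|^{p-1}|\nabla\varphi|\,dV$ and sidesteps the $L_p$-oscillation issue), and then absorbs via the pointwise Young inequality $ab\le\varepsilon a^{p/(p-1)}+A(\varepsilon)b^p$ together with the bounded overlap of the cover; this uses~\eqref{3.2} exactly in the given form. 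To repair your argument, replace the telescoping-plus-Hardy step by this Abel summation and absorption scheme (or otherwise prove the constrained inequality directly rather than its unconstrained strengthening).
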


\begin{theorem}\label{t3.2}
Let $M$ be a  $p$-parabolic manifold.
Then for problem~\eqref {1.1}, \eqref{1.2} to have a solution, it is necessary and sufficient 
that~\eqref{t3.1.1} holds and, moreover, conditions~\eqref{t2.2.1} and~\eqref{t2.2.2} are valid
for some sequence $\eta_s \in C_0^\infty (M)$.
\end{theorem}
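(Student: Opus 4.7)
The plan is to combine the techniques of Theorems~\ref{t2.2} and~\ref{t3.1}: the partition-of-unity and mean-chaining estimates of the $p$-hyperbolic setting handle the non-compact support of $F$, while the $\eta_s$-correction trick from Theorem~\ref{t2.2} compensates for the absence of a global Poincar\'e inequality on a $p$-parabolic manifold.

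For necessity, I would observe (as in the introduction) that existence of a solution of~\eqref{1.1}, \eqref{1.2} is equivalent to $F \in \stackrel{\rm \scriptscriptstyle o}{L}\!\!{}_p^1 (M)^*$. Condition~\eqref{t3.1.1} then follows by the standard duality argument used for Theorem~\ref{t3.1}: for each $i$, choose $\varphi_i \in C_0^\infty(\Omega_i)$ with $\|\varphi_i\|_{L_p^1(\Omega_i)} = 1$ and $(F, \varphi_i) \ge \frac{1}{2} \|F\|_{\stackrel{\rm \scriptscriptstyle o}{L}{}_p^1 (\Omega_i)^*}$, test $F$ against $\Phi_N = \sum_{i=1}^N \|F\|_{\stackrel{\rm \scriptscriptstyle o}{L}{}_p^1 (\Omega_i)^*}^{1/(p-1)} \varphi_i$, and exploit the multiplicity bound $\|\Phi_N\|_{L_p^1(M)}^p \le k \sum_{i=1}^N \|F\|_{\stackrel{\rm \scriptscriptstyle o}{L}{}_p^1 (\Omega_i)^*}^{p/(p-1)}$. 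Conditions~\eqref{t2.2.1} and~\eqref{t2.2.2} are then automatic: since $M$ is $p$-parabolic, $\operatorname{cap}_p(K) = 0$ for every compact $K$ (by monotonicity), so a sequence $\eta_s$ as in~\eqref{t2.2.2} exists, and $(F, \eta_s) \to 0$ by continuity.

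For sufficiency, I would prove $F \in \stackrel{\rm \scriptscriptstyle o}{L}\!\!{}_p^1 (M)^*$. Fix a Lipschitz domain $\Omega_0$ with compact closure containing $K$. Given $\varphi \in C_0^\infty(M)$, set
$$
\alpha = \frac{\int_{\Omega_0} \gamma \varphi \, dV}{\int_{\Omega_0} \gamma \, dV}
$$
and decompose $(F,\varphi) = (F, \tilde\varphi_s) + \alpha (F, \eta_s)$ with $\tilde\varphi_s = \varphi - \alpha \eta_s \in C_0^\infty(M)$. The second summand vanishes as $s \to \infty$ by~\eqref{t2.2.1}. To bound the first summand uniformly in $s$, use the partition of unity to write
$$
\tilde\varphi_s = \sum_i \psi_i (\tilde\varphi_s - \alpha_i^s) + \sum_i \alpha_i^s \psi_i,
$$
where $\alpha_i^s$ is the $\gamma$-weighted mean of $\tilde\varphi_s$ on $\Omega_i \cup \Omega_{i+1}$. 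The first sum pairs with $F$ via H\"older: since $\psi_i(\tilde\varphi_s - \alpha_i^s) \in C_0^\infty(\Omega_i)$, the gradient bound~\eqref{3.5}, the $L_p$-version of the weighted Poincar\'e inequality (Lemma~\ref{l3.1}), and the multiplicity $k$ combined with~\eqref{t3.1.1} give $\le C\|\tilde\varphi_s\|_{L_p^1(M)}$. For the second sum, which is compactly supported since $\tilde\varphi_s$ is, telescope $\alpha_i^s = -\sum_{j \ge i}(\alpha_{j+1}^s - \alpha_j^s)$, bound each difference using Lemma~\ref{l3.1} on $\Omega_{j+1} \cup \Omega_{j+2}$, and sum with the aid of~\eqref{3.3} to obtain another $C\|\tilde\varphi_s\|_{L_p^1(M)}$ estimate. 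Letting $s \to \infty$, in view of $\|\tilde\varphi_s\|_{L_p^1(M)} \to \|\varphi\|_{L_p^1(M)}$, yields the desired continuity.

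The main obstacle will be the telescoping estimate for $\sum_i \alpha_i^s \psi_i$. The $p$-parabolic setting forces chaining of mean-differences \emph{toward infinity} --- in contrast to Theorem~\ref{t3.1}, where~\eqref{3.2} would be used to chain from a fixed base --- and one must verify that the resulting bounds are uniform in $s$, so that the final passage $s \to \infty$ (using Rellich compactness to extract a subsequence with $\eta_s \to 1$ in $W_p^1(\Omega_0)$, as in the proof of Theorem~\ref{t2.2}) indeed produces $|(F,\varphi)| \le C\|\varphi\|_{L_p^1(M)}$ with $C$ independent of $\varphi$.
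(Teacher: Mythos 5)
Your necessity argument and the overall skeleton of the sufficiency part (write $\varphi=\tilde\varphi_s+\alpha\eta_s$, kill the $\alpha(F,\eta_s)$ term via~\eqref{t2.2.1}, and bound $(F,\tilde\varphi_s)$ uniformly in $s$ using the partition of unity, H\"older against~\eqref{t3.1.1}, and the limits~\eqref{pt2.2.7}--\eqref{pt2.2.8}) match the paper. The gap is in how you propose to use condition~\eqref{3.3}. Your decomposition $\tilde\varphi_s=\sum_i\psi_i(\tilde\varphi_s-\alpha_i^s)+\sum_i\alpha_i^s\psi_i$ requires, for the first sum, an $L_p$-version of the weighted Poincar\'e inequality on $\Omega_i\cup\Omega_{i+1}$ with constants bounded \emph{uniformly in $i$}: after expanding $|\nabla(\psi_i(\tilde\varphi_s-\alpha_i^s))|$ and using~\eqref{3.5} you must control $\int_{\Omega_i}\gamma|\tilde\varphi_s-\alpha_i^s|^p\,dV$ by $D_i\int_{\Omega_i\cup\Omega_{i+1}}|\nabla\tilde\varphi_s|^p\,dV$ with $\sup_iD_i<\infty$ in order to sum over $i$. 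No such hypothesis is available: Lemma~\ref{l3.1} is an $L_1$-type inequality, and conditions~\eqref{3.2}--\eqref{3.3} control only the specific combination of the $L_1$-constants $C_i$ with the sums of $\int_{\Omega_j}\gamma\,dV$. The paper deliberately avoids ever subtracting local means from $\varphi$: it applies Lemma~\ref{l3.1} (via Lemma~\ref{l3.2}) to the function $|\varphi|^p$ inside an Abel-summation argument, which after Young's inequality and absorption yields the global weighted bound $\int_M\gamma|\varphi|^p\,dV\le C\int_M|\nabla\varphi|^p\,dV$ for functions vanishing on $\Omega_1$ (Lemma~\ref{l3.4}); condition~\eqref{3.3} is tailored exactly to make that absorption work. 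Similarly, your telescoping $\alpha_i^s=-\sum_{j\ge i}(\alpha_{j+1}^s-\alpha_j^s)$ leads, after interchanging the sums, to partial sums $\sum_{i\le j}\int_{\Omega_i}\gamma\,dV$ paired with the mean differences, i.e.\ to a discrete Hardy inequality whose Muckenhoupt-type condition is not visibly the sup in~\eqref{3.3} (which involves the \emph{tail} sums $\sum_{j>i}\int_{\Omega_j}\gamma\,dV$); you assert this step closes "with the aid of~\eqref{3.3}" but do not show it, and it is not routine.

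The paper's actual route for sufficiency sidesteps both issues: it introduces a cutoff $\psi\in C_0^\infty(\Omega)$ equal to one on $\Omega_1$ and splits $\varphi-\alpha\eta_j$ into $\varphi_j'=(\varphi-\alpha\eta_j)(1-\psi)$, which vanishes on $\Omega_1$ so that Lemma~\ref{l3.4} applies and the Theorem~\ref{t3.1} computation goes through verbatim, and $\varphi_j''=(\varphi-\alpha\eta_j)\psi$, which is supported in the fixed compact $\Omega$, meets only finitely many $\Omega_i$, and is handled by the Poincar\'e inequality on $\Omega$ together with the Rellich extraction $\|1-\eta_j\|_{W_p^1(\Omega)}\to0$ exactly as in Theorem~\ref{t2.2}. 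To repair your argument you would either need to add a uniform $L_p$ Poincar\'e hypothesis (changing the theorem) or replace your mean-subtraction decomposition by the cutoff decomposition and invoke Lemma~\ref{l3.4}.
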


The proof of Theorems~\ref{t3.1} and~\ref{t3.2} relies on Lemmas~\ref{l3.2} -- \ref{l3.4}.

\begin{lemma}\label{l3.2}
Let $\omega_1$ and $\omega_2$ are measurable subsets of a Lipschitz domain $\omega \Subset M$ such that
$$
	\gamma_i 
	= 
	\int_{\omega_i}
	\gamma 
	\,
	dV
	>
	0,
	\quad
	i = 1,2.
$$
Then
$$
	\frac{
		1
	}{
		\gamma_1
	}
	\int_{\omega_1}
	\gamma 
	|u|
	\,
	dV
	\le
	C
	\left(
		\frac{
			1
		}{
			\gamma_1
		}
		+
		\frac{
			1
		}{
			\gamma_2
		}
	\right)
	\int_\omega
	\gamma^{1 - 1 / p}
	|\nabla u|
	\,
	dV
	+
	\frac{
		1
	}{
		\gamma_2
	}
	\int_{\omega_2}
	\gamma
	|u|
	\,
	dV
$$
for any function $u \in W_1^1 (\omega)$, where $C > 0$ is the constant in~\eqref{l3.1.1}.
\end{lemma}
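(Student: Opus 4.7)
My plan is to bridge between integrals of $|u|$ over $\omega_1$ and $\omega_2$ using a common ``reference constant'' $\bar u$ given by the Poincar\'e inequality (Lemma~\ref{l3.1}) applied on the Lipschitz domain $\omega$, and then to repeat the triangle inequality on each side.

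First I would apply Lemma~\ref{l3.1} to $u$ on $\omega$ to obtain
$$
    \int_\omega
    \gamma
    |u - \bar u|
    \,
    dV
    \le
    C
    \int_\omega
    \gamma^{1 - 1 / p}
    |\nabla u|
    \,
    dV,
$$
with $\bar u = \int_\omega u \, dV / \int_\omega \gamma \, dV$, and note that, since $\omega_1, \omega_2 \subset \omega$, the same gradient bound controls $\int_{\omega_i} \gamma |u - \bar u| \, dV$ for $i = 1,2$.

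Next I would use the triangle inequality in two directions. On $\omega_1$, write
$$
    \int_{\omega_1}
    \gamma
    |u|
    \,
    dV
    \le
    \int_{\omega_1}
    \gamma
    |u - \bar u|
    \,
    dV
    +
    \gamma_1 |\bar u|,
$$
and on $\omega_2$ go the other way to isolate $|\bar u|$:
$$
    \gamma_2 |\bar u|
    \le
    \int_{\omega_2}
    \gamma
    |u - \bar u|
    \,
    dV
    +
    \int_{\omega_2}
    \gamma
    |u|
    \,
    dV.
$$
Dividing the first by $\gamma_1$, substituting the resulting bound for $|\bar u|$ (obtained by dividing the second by $\gamma_2$), and invoking the Poincar\'e bound from the first step to control each of the two $|u - \bar u|$ integrals yields precisely the claimed inequality, with the same constant $C$ as in~\eqref{l3.1.1}.

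There is no real obstacle here; it is essentially bookkeeping with the triangle inequality. The only point to keep in mind is that one must choose the \emph{same} $\bar u$ (the one supplied by Lemma~\ref{l3.1} on $\omega$) on both sides, so that a single application of~\eqref{l3.1.1} simultaneously dominates the oscillation terms over $\omega_1$ and over $\omega_2$, allowing the constants to combine into the stated factor $C (1/\gamma_1 + 1/\gamma_2)$.
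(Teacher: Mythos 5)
Your proposal is correct and follows essentially the same route as the paper's proof: a single application of Lemma~\ref{l3.1} on $\omega$ with the common mean $\bar u$, the triangle inequality $|u|-|\bar u|\le|u-\bar u|$ on $\omega_1$ and $|\bar u|-|u|\le|u-\bar u|$ on $\omega_2$, then division by $\gamma_1$ and $\gamma_2$ and substitution. Nothing is missing.
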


\begin{proof}
Taking into account~\eqref{l3.1.1}, we have
$$
	\int_{\omega_1}
	\gamma 
	|u - \bar u|
	\,
	dV
	\le
	C
	\int_\omega
	\gamma^{1 - 1 / p}
	|\nabla u|
	\,
	dV.
$$
By the inequality $|u| - |\bar u| \le |u - \bar u|$, this implies that
$$
	\int_{\omega_1}
	\gamma 
	|u|
	\,
	dV
	-
	\int_{\omega_1}
	\gamma 
	|\bar u|
	\,
	dV
	\le
	C
	\int_\omega
	\gamma^{1 - 1 / p}
	|\nabla u|
	\,
	dV
$$
or, in other words,
\begin{equation}
	\frac{1}{\gamma_1}
	\int_{\omega_1}
	\gamma 
	|u|
	\,
	dV
	\le
	\frac{C}{\gamma_1}
	\int_\omega
	\gamma^{1 - 1 / p}
	|\nabla u|
	\,
	dV
	+
	|\bar u|.
	\label{pl3.2.1}
\end{equation}
By~\eqref{l3.1.1}, we also have
$$
	\int_{\omega_2}
	\gamma 
	|u - \bar u|
	\,
	dV
	\le
	C
	\int_\omega
	\gamma^{1 - 1 / p}
	|\nabla u|
	\,
	dV,
$$
whence in accordance with the inequality $|\bar u| - |u| \le |u - \bar u|$ we immediately obtain
$$
	\int_{\omega_2}
	\gamma 
	|\bar u|
	\,
	dV
	-
	\int_{\omega_2}
	\gamma 
	|u|
	\,
	dV
	\le
	C
	\int_\omega
	\gamma^{1 - 1 / p}
	|\nabla u|
	\,
	dV.
$$
This is obviously equivalent to 
$$
	|\bar u|
	\le
	\frac{C}{\gamma_2}
	\int_\omega
	\gamma^{1 - 1 / p}
	|\nabla u|
	\,
	dV
	+
	\frac{1}{\gamma_2}
	\int_{\omega_2}
	\gamma 
	|u|
	\,
	dV.
$$
Combining the last formula with~\eqref{pl3.2.1}, we complete the proof.
\end{proof}

\begin{lemma}\label{l3.3}
Let the cover~\eqref{3.1} satisfies condition~\eqref{3.2}.
Then
\begin{equation}
	\int_M
	\gamma
	|\varphi|^p
	\,
	dV
	\le
	C
	\int_M
	|\nabla \varphi|^p
	\,
	dV
	\label{l3.3.1}
\end{equation}
for any function $\varphi \in C_0^\infty (M)$, where the constant $C > 0$ 
depend only on $p$, the multiplicity of the cover~\eqref{3.1}, and the left-hand side of~\eqref{3.2}.
\end{lemma}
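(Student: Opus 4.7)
The plan is to apply Lemma~\ref{l3.2} to $u = |\varphi|^p$ on the overlapping pair $\omega = \Omega_i \cup \Omega_{i+1}$, with $\omega_1 = \Omega_i$ and $\omega_2 = \Omega_{i+1}$. Since $\Omega_i \cap \Omega_{i+1} \neq \emptyset$ and both domains are Lipschitz with compact closure, so is their union. Because $p > 1$, the function $|\varphi|^p$ lies in $W_1^1(\omega)$ with $|\nabla |\varphi|^p| = p\,|\varphi|^{p-1}|\nabla \varphi|$. Writing $\gamma_i = \int_{\Omega_i}\gamma\,dV > 0$, $A_i = \gamma_i^{-1}\int_{\Omega_i}\gamma|\varphi|^p\,dV$, and $B_i = \int_{\Omega_i \cup \Omega_{i+1}}\gamma^{1 - 1/p}|\varphi|^{p-1}|\nabla\varphi|\,dV$, Lemma~\ref{l3.2} delivers the one-step recurrence $A_i \le p\,C_i(\gamma_i^{-1} + \gamma_{i+1}^{-1})\,B_i + A_{i+1}$.

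Next, since $\supp \varphi$ is compact and the cover~\eqref{3.1} is locally finite, $A_i = 0$ for all $i$ beyond some finite index, so iterating the recurrence gives $A_i \le p\sum_{j \ge i}C_j(\gamma_j^{-1} + \gamma_{j+1}^{-1})\,B_j$. I then multiply through by $\gamma_i$, sum over $i$, and swap the order of summation to obtain
$$
\sum_i \int_{\Omega_i}\gamma|\varphi|^p\,dV \le p\sum_{j=1}^\infty C_j\bigl(\gamma_j^{-1} + \gamma_{j+1}^{-1}\bigr)\Bigl(\sum_{i=1}^j \gamma_i\Bigr) B_j.
$$
The coefficient in parentheses is precisely what is assumed bounded in~\eqref{3.2}, so the right-hand side collapses to $K\sum_j B_j$ for a constant $K$ depending only on $p$ and the supremum in~\eqref{3.2}.

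The finite multiplicity $k$ of the cover bounds $\sum_j B_j \le 2k\int_M \gamma^{1-1/p}|\varphi|^{p-1}|\nabla\varphi|\,dV$, while the cover property gives $\int_M \gamma|\varphi|^p\,dV \le \sum_i \int_{\Omega_i}\gamma|\varphi|^p\,dV$. H\"older's inequality with exponents $p/(p-1)$ and $p$ yields
$$
\int_M \gamma^{1-1/p}|\varphi|^{p-1}|\nabla\varphi|\,dV \le \Bigl(\int_M \gamma|\varphi|^p\,dV\Bigr)^{(p-1)/p}\Bigl(\int_M |\nabla\varphi|^p\,dV\Bigr)^{1/p}.
$$
Assuming $\int_M \gamma|\varphi|^p\,dV > 0$ (otherwise~\eqref{l3.3.1} is trivial), dividing through by the first factor on the right and raising to the $p$-th power produces~\eqref{l3.3.1} with constant $(2pKk)^p$.

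The main obstacle I expect is justifying the telescoping to infinity and the switch of summation order; both rest on the compactness of $\supp \varphi$ together with local finiteness, which truncate the relevant sums to a finite range. The real content of the proof is the choice of test function $|\varphi|^p$, which is engineered so that Lemma~\ref{l3.2} links $\gamma|\varphi|^p$ to the mixed expression $\gamma^{1 - 1/p}|\varphi|^{p-1}|\nabla \varphi|$, whose H\"older splitting is exactly what is needed to absorb $\int_M \gamma|\varphi|^p\,dV$ back into the left-hand side.
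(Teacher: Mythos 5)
Your proof is correct and follows essentially the same route as the paper: the same application of Lemma~\ref{l3.2} to $u = |\varphi|^p$ on $\Omega_i \cup \Omega_{i+1}$ and the same weighted summation lead to the key intermediate bound $\int_M \gamma|\varphi|^p\,dV \le p\sum_j S_j C_j\bigl(\gamma_j^{-1}+\gamma_{j+1}^{-1}\bigr)B_j$, which the paper reaches by Abel summation and you by telescoping and swapping the order of summation --- the same computation. The only difference is the closing step, where you apply H\"older globally and divide by $\bigl(\int_M \gamma|\varphi|^p\,dV\bigr)^{(p-1)/p}$ (legitimate, since this quantity is finite for $\varphi \in C_0^\infty(M)$ because $\gamma$ is bounded on compact sets) instead of the paper's termwise Young inequality with $\varepsilon$-absorption; both are valid.
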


\begin{proof}
We denote by $C_i > 0$ the constant in~\eqref{l3.1.1} for $\omega = \Omega_i \cup \Omega_{i + 1}$.
Put
$$
	S_i 
	= 
	\sum_{j = 1}^i
	\int_{\Omega_j}
	\gamma
	\,
	dV
	\quad
	\mbox{and}
	\quad
	\gamma_i 
	= 
	\int_{\Omega_i}
	\gamma
	\,
	dV,
	\quad
	i = 1,2,\ldots.
$$ 
Let us also assume that $S_0 = 0$.
We have
\begin{align*}
	&
	\int_M
	\gamma
	|\varphi|^p
	\,
	dV
	\le
	\sum_{i = 1}^\infty
	\int_{\Omega_i}
	\gamma
	|\varphi|^p
	\,
	dV
	=
	\sum_{i = 1}^\infty
	\frac{
		S_i - S_{i - 1}
	}{
		\gamma_i
	}
	\int_{\Omega_i}
	\gamma
	|\varphi|^p
	\,
	dV
	\\
	&
	\qquad
	{}
	=
	\sum_{i = 1}^\infty
	S_i
	\left(
		\frac{1}{\gamma_i}
		\int_{\Omega_i}
		\gamma
		|\varphi|^p
		\,
		dV
		-
		\frac{1}{\gamma_{i + 1}}
		\int_{\Omega_{i + 1}}
		\gamma
		|\varphi|^p
		\,
		dV
	\right),
\end{align*}
whence in accordance with the inequality
\begin{align*}
	\frac{
		1
	}{
		\gamma_i
	}
	\int_{\Omega_i}
	\gamma 
	|\varphi|^p
	\,
	dV
	\le
	{}
	&
	C_i
	\left(
		\frac{
			1
		}{
			\gamma_i
		}
		+
		\frac{
			1
		}{
			\gamma_{i + 1}
		}
	\right)
	p
	\int_{\Omega_i \cup \Omega_{i + 1}}
	\gamma^{1 - 1 / p}
	|\varphi|^{p - 1} |\nabla \varphi|
	\,
	dV
	\\
	&
	{}
	+
	\frac{
		1
	}{
		\gamma_{i + 1}
	}
	\int_{\Omega_{i + 1}}
	\gamma
	|\varphi|^p
	\,
	dV
\end{align*}
which follows from Lemma~\ref{l3.2} we arrive at the estimate
\begin{equation}
	\int_M
	\gamma
	|\varphi|^p
	\,
	dV
	\le
	\sum_{i = 1}^\infty
	S_i
	C_i
	\left(
		\frac{
			1
		}{
			\gamma_i
		}
		+
		\frac{
			1
		}{
			\gamma_{i + 1}
		}
	\right)
	p
	\int_{\Omega_i \cup \Omega_{i + 1}}
	\gamma^{1 - 1 / p}
	|\varphi|^{p - 1} |\nabla \varphi|
	\,
	dV.
	\label{pl3.3.1}
\end{equation}
At the same time, from Jensen's inequality, it follows that
\begin{align*}
	&
	S_i
	C_i
	\left(
		\frac{
			1
		}{
			\gamma_i
		}
		+
		\frac{
			1
		}{
			\gamma_{i + 1}
		}
	\right)
	p
	\int_{\Omega_i \cup \Omega_{i + 1}}
	\gamma^{1 - 1 / p}
	|\varphi|^{p - 1} |\nabla \varphi|
	\,
	dV
	\le
	\varepsilon
	\int_{\Omega_i \cup \Omega_{i + 1}}
	\gamma
	|\varphi|^p
	\,
	dV
	\\
	&
	\qquad
	{}
	+
	A
	S_i^p
	C_i^p
	\left(
		\frac{
			1
		}{
			\gamma_i
		}
		+
		\frac{
			1
		}{
			\gamma_{i + 1}
		}
	\right)^p
	\int_{\Omega_i \cup \Omega_{i + 1}}
	|\nabla \varphi|^p
	\,
	dV
\end{align*}
for any $\varepsilon > 0$, where the constant $A > 0$ depends only on $\varepsilon$ and $p$;
therefore,~\eqref{pl3.3.1} allows us to assert that
$$
	\int_M
	\gamma
	|\varphi|^p
	\,
	dV
	\le
	2
	k
	\varepsilon
	\int_M
	\gamma
	|\varphi|^p
	\,
	dV
	+
	B
	\int_M
	|\nabla \varphi|^p
	\,
	dV
$$
for any $\varepsilon > 0$, where $k$ is the multiplicity of the cover~\eqref{3.1} 
and $B > 0$ is a constant, depending only on $\varepsilon$, $p$, $k$, and the left hand-side of~\eqref{3.2}.
Thus, taking sufficiently small $\varepsilon > 0$ in the last inequality, we complete the proof.
\end{proof}

\begin{lemma}\label{l3.4}
Let the cover~\eqref{3.1} satisfy condition~\eqref{3.3}.
Then inequality~\eqref{l3.3.1} is valid for any function $\varphi \in C^\infty (M)$ equal to zero on $\Omega_1$,
where the constant $C > 0$ depends only on 
$p$, the multiplicity of the cover~\eqref{3.1} and the left-hand side of~\eqref{3.3}.
\end{lemma}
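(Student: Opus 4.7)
The plan is to mimic the strategy of Lemma~\ref{l3.3}, but to iterate the local Poincar\'e-type comparison from Lemma~\ref{l3.2} in the opposite direction, so that the tails $\sum_{j \ge i + 1} \gamma_j$ (with $\gamma_j = \int_{\Omega_j} \gamma \, dV$) governed by hypothesis~\eqref{3.3} appear in place of the partial sums $S_i$. The key step is to apply Lemma~\ref{l3.2} with $\omega = \Omega_i \cup \Omega_{i + 1}$, $\omega_1 = \Omega_{i + 1}$, $\omega_2 = \Omega_i$, and $u = |\varphi|^p$. Since $|\nabla (|\varphi|^p)| = p |\varphi|^{p - 1} |\nabla \varphi|$, this yields
\[
	b_{i + 1}
	\le
	b_i + G_i,
	\qquad
	b_i := \frac{1}{\gamma_i} \int_{\Omega_i} \gamma |\varphi|^p \, dV,
\]
where
\[
	G_i = C_i \left( \frac{1}{\gamma_i} + \frac{1}{\gamma_{i + 1}} \right) p \int_{\Omega_i \cup \Omega_{i + 1}} \gamma^{1 - 1 / p} |\varphi|^{p - 1} |\nabla \varphi| \, dV.
\]
Because $\varphi \equiv 0$ on $\Omega_1$, we have $b_1 = 0$, and a one-line induction gives $b_i \le \sum_{j = 1}^{i - 1} G_j$ for all $i \ge 1$.

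Multiplying by $\gamma_i$ and swapping the order of summation then produces
\[
	\int_M \gamma |\varphi|^p \, dV
	\le
	\sum_{i = 1}^\infty \gamma_i b_i
	\le
	\sum_{j = 1}^\infty T_j \, G_j,
	\qquad
	T_j := \sum_{i = j + 1}^\infty \gamma_i.
\]
Condition~\eqref{3.3} is exactly the assertion that $T_j C_j (\gamma_j^{-1} + \gamma_{j + 1}^{-1})$ is bounded in $j$ by some $N < \infty$, so the right-hand side is dominated by $N p \sum_{j = 1}^\infty \int_{\Omega_j \cup \Omega_{j + 1}} \gamma^{1 - 1/p} |\varphi|^{p - 1} |\nabla \varphi| \, dV$. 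From here the argument matches the end of Lemma~\ref{l3.3}: Young's inequality applied to $\gamma^{1 - 1/p} |\varphi|^{p - 1} |\nabla \varphi| = (\gamma |\varphi|^p)^{(p - 1)/p} |\nabla \varphi|$ with a small parameter $\varepsilon > 0$ gives the bound $\varepsilon \gamma |\varphi|^p + A(\varepsilon, p) |\nabla \varphi|^p$, and summing over $j$ while using the fact that the cover $\{ \Omega_j \cup \Omega_{j + 1} \}$ still has multiplicity at most $2k$ yields
\[
	\int_M \gamma |\varphi|^p \, dV
	\le
	2 k N p \varepsilon \int_M \gamma |\varphi|^p \, dV
	+
	B \int_M |\nabla \varphi|^p \, dV
\]
for some $B = B(\varepsilon, p, k, N)$. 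Choosing $\varepsilon$ small enough that $2 k N p \varepsilon < 1$ and absorbing the first term on the right produces~\eqref{l3.3.1}.

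The main obstacle is the absorption step, which is legitimate only if $\int_M \gamma |\varphi|^p \, dV < \infty$ a priori; for $\varphi \in C^\infty (M)$ without compact support this is not automatic. I would first establish the inequality for $\varphi \in C_0^\infty (M)$ vanishing on $\Omega_1$, where only finitely many $b_i$ are nonzero so that the telescoping, the swap of summation, and the absorption are all trivially justified. The general case then follows by truncation: multiply $\varphi$ by a smooth cutoff $\chi_N$ equal to one on $\Omega_1 \cup \cdots \cup \Omega_N$ and zero outside $\Omega_1 \cup \cdots \cup \Omega_{N + 1}$, chosen via the partition of unity $\psi_i$ and~\eqref{3.5} so that the extra gradient term $\varphi \nabla \chi_N$ contributes only on a strip whose contribution vanishes in the limit; monotone convergence on the left and dominated convergence on the right then yield~\eqref{l3.3.1} for the original $\varphi$.
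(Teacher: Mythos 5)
Your main argument is the paper's proof in different notation. The paper likewise applies Lemma~\ref{l3.2} on $\Omega_i \cup \Omega_{i+1}$ with the two subdomains swapped relative to Lemma~\ref{l3.3} (i.e. $\omega_1 = \Omega_{i+1}$, $\omega_2 = \Omega_i$), uses $\varphi|_{\Omega_1} = 0$ to kill the initial term, and lands on exactly your bound $\int_M \gamma |\varphi|^p\,dV \le \sum_j S_j G_j$ with $S_j = \sum_{i > j} \gamma_i$; the only difference is that it gets there by Abel summation (writing $\gamma_{i+1} = S_i - S_{i+1}$ and summing by parts) where you use induction on $b_{i+1} \le b_i + G_i$ plus Tonelli --- the same computation. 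The Young-inequality/multiplicity/absorption endgame is then, as you say, a verbatim repeat of Lemma~\ref{l3.3}, and your multiplicity count $2k$ for $\{\Omega_j \cup \Omega_{j+1}\}$ is right.

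The one point where you go beyond the paper is the finiteness needed for absorption, which you are right to flag (the paper silently ignores it, and in fact only ever applies the lemma to compactly supported functions in the proof of Theorem~\ref{t3.2}), but your truncation does not close: for a cutoff $\chi_N$ built from the $\psi_i$, condition~\eqref{3.5} only gives $\int_M |\varphi \nabla \chi_N|^p\,dV \le C \int_{E_N} \gamma |\varphi|^p\,dV$ over the transition strip $E_N$, and there is no reason for this to vanish, or even stay bounded, as $N \to \infty$ unless one already knows $\int_M \gamma |\varphi|^p\,dV < \infty$ --- which is what is being proved, so the reduction is circular. The clean fix is to absorb at the level of finite partial sums instead of truncating $\varphi$: for each $m$ one has
$$
	\sum_{i = 2}^{m + 1}
	\int_{\Omega_i} \gamma |\varphi|^p \, dV
	\le
	\sum_{j = 1}^{m} S_j G_j
	\le
	2 k N p \varepsilon
	\sum_{i = 2}^{m + 1}
	\int_{\Omega_i} \gamma |\varphi|^p \, dV
	+
	B \int_M |\nabla \varphi|^p \, dV
$$
(the truncated exchange of summation, or equivalently the finite Abel summation with its boundary term $- S_{m+1} b_{m+1} \le 0$, only improves the inequality, and the term $i = 1$ drops since $\varphi = 0$ on $\Omega_1$). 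Each partial sum on the left is finite because the $\overline{\Omega}_i$ are compact, $\gamma$ is bounded on compact sets and $\varphi$ is continuous, so the absorption is legitimate for every $m$, and letting $m \to \infty$ gives~\eqref{l3.3.1} for all $\varphi \in C^\infty(M)$ vanishing on $\Omega_1$, with no a priori integrability assumption.
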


\begin{proof}
We put
$$
	S_i 
	= 
	\sum_{j = i + 1}^\infty
	\int_{\Omega_j}
	\gamma
	\,
	dV
	\quad
	\mbox{and}
	\quad
	\gamma_i 
	= 
	\int_{\Omega_i}
	\gamma
	\,
	dV,
	\quad
	i = 1,2,\ldots.
$$ 
Condition~\eqref{3.3}, in particular, means that $S_i < \infty$ for all positive integers $i$.
As before, by $C_i > 0$ we denote the constant in inequality~\eqref{l3.1.1} for
$\omega = \Omega_i \cup \Omega_{i + 1}$.

It can be seen that
\begin{align*}
	&
	\int_M
	\gamma
	|\varphi|^p
	\,
	dV
	\le
	\sum_{i = 1}^\infty
	\int_{\Omega_{i + 1}}
	\gamma
	|\varphi|^p
	\,
	dV
	=
	\sum_{i = 1}^\infty
	\frac{
		S_i - S_{i + 1}
	}{
		\gamma_{i + 1}
	}
	\int_{\Omega_{i + 1}}
	\gamma
	|\varphi|^p
	\,
	dV
	\\
	&
	\qquad
	{}
	=
	\sum_{i = 1}^\infty
	S_i
	\left(
		\frac{1}{\gamma_{i + 1}}
		\int_{\Omega_{i + 1}}
		\gamma
		|\varphi|^p
		\,
		dV
		-
		\frac{1}{\gamma_i}
		\int_{\Omega_i}
		\gamma
		|\varphi|^p
		\,
		dV
	\right),
\end{align*}
whence in accordance with the inequality
\begin{align*}
	\frac{
		1
	}{
		\gamma_{i + 1}
	}
	\int_{\Omega_{i + 1}}
	\gamma 
	|\varphi|^p
	\,
	dV
	\le
	{}
	&
	C_i
	\left(
		\frac{
			1
		}{
			\gamma_i
		}
		+
		\frac{
			1
		}{
			\gamma_{i + 1}
		}
	\right)
	p
	\int_{\Omega_i \cup \Omega_{i + 1}}
	\gamma^{1 - 1 / p}
	|\varphi|^{p - 1} |\nabla \varphi|
	\,
	dV
	\\
	&
	{}
	+
	\frac{
		1
	}{
		\gamma_i
	}
	\int_{\Omega_i}
	\gamma
	|\varphi|^p
	\,
	dV
\end{align*}
which follows from Lemma~\ref{l3.2} we obtain~\eqref{pl3.3.1}.
In conclusion, it remains to repeat the arguments given in the proof of Lemma~\ref{l3.3}.
\end{proof}

\begin{corollary}\label{c3.1}
If the cover~\eqref{3.1} satisfies condition~\eqref{3.2}, then the manifold $M$ is $p$-hyperbolic.
\end{corollary}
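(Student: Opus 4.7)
The plan is to use Lemma~\ref{l3.3} directly to produce a compact set of positive $p$-capacity, which by Definition~\ref{d1.2} is exactly what is needed to conclude that $M$ is $p$-hyperbolic.

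First, I would fix a compact set $K \subset M$ of positive measure on which the weight $\gamma$ is bounded away from zero; a concrete choice is $K = \overline{K_0}$ for any compact $K_0 \subset \Omega_1$ of positive measure. Since $\gamma$ is separated from zero on compact subsets of $M$ by hypothesis, the quantity
$$
    \mu := \int_K \gamma \, dV
$$
is strictly positive and finite.

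Next, I would invoke Lemma~\ref{l3.3}: condition~\eqref{3.2} on the cover~\eqref{3.1} yields a constant $C > 0$ such that
$$
    \int_M \gamma |\varphi|^p \, dV \le C \int_M |\nabla \varphi|^p \, dV
$$
for every $\varphi \in C_0^\infty(M)$. For any admissible test function $\varphi$ in the definition of $\operatorname{cap}_p(K)$ — that is, $\varphi \in C_0^\infty(M)$ with $\varphi \equiv 1$ in a neighborhood of $K$ — we then have $|\varphi|^p \ge 1$ on $K$, hence
$$
    \mu = \int_K \gamma \, dV \le \int_M \gamma |\varphi|^p \, dV \le C \int_M |\nabla \varphi|^p \, dV.
$$
Taking the infimum over such $\varphi$ gives $\operatorname{cap}_p (K) \ge \mu / C > 0$, and by monotonicity of capacity, $\operatorname{cap}_p (M) \ge \operatorname{cap}_p (K) > 0$. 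Thus $M$ is $p$-hyperbolic.

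There is essentially no obstacle here: all the real work was already done in Lemma~\ref{l3.3}. The only mild care needed is to pick $K$ so that $\int_K \gamma \, dV > 0$, which is automatic from the stated properties of $\gamma$ on compact sets.
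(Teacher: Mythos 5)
Your proof is correct and follows essentially the same route as the paper: both apply Lemma~\ref{l3.3} to an admissible test function for a compact set $K$ of positive measure, use $\int_K \gamma\, dV > 0$ to bound $\operatorname{cap}_p(K)$ from below, and conclude $p$-hyperbolicity. The extra care you take (choosing $K \subset \Omega_1$, spelling out the infimum step) is harmless but not needed beyond what the paper does.
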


\begin{proof}
Indeed, let $K$ be a compact set of positive measure.
Using Lemma~\ref{l3.3}, we have
$$
	0
	<
	\int_K
	\gamma
	\,
	dV
	\le
	C
	\int_M
	|\nabla \varphi|^p
	\,
	dV
$$
for any function $\varphi \in C_0^\infty (M)$ equal to one in a neighborhood of $K$,
where the constant $C > 0$ does not depend of $\varphi$.
Thus, $\operatorname{cap} (K) > 0$.
\end{proof}

\begin{proof}[Proof of Theorem~$\ref{t3.1}$]
We shall follow the idea given in~\cite{MP}.
Assume that pro\-blem~\eqref{1.1}, \eqref{1.2} has a solution.
In this case, $F$ is a continuous functional in
$\stackrel{\rm \scriptscriptstyle o}{L}\!\!{}_p^1 (M)$.
Let us show the validity of~\eqref{t3.1.1}.
We take functions $\varphi_i \in C_0^\infty (\Omega_i)$ such that
$$
	\| \varphi_i \|_{
		L_p^1 (\Omega_i)
	}
	=
	1
	\quad
	\mbox{and}
	\quad
	(F, \varphi_i)
	\ge
	\frac{1}{2}
	\| F \|_{
		\stackrel{\rm \scriptscriptstyle o}{L}{}_p^1 (\Omega_i)^*
	},
	\quad
	i = 1,2,\ldots.
$$
Putting
$$
	\Phi_j (x)
	=
	\sum_{i=1}^j
	\| F \|_{
		\stackrel{\rm \scriptscriptstyle o}{L}{}_p^1 (\Omega_i)^*
	}^{1 / (p - 1)}
	\varphi_i (x),
	\quad
	j = 1,2,\ldots,
$$
we have
\begin{equation}
	(F, \Phi_j) 
	\ge
	\frac{1}{2}
	\sum_{i=1}^j
	\| F \|_{
		\stackrel{\rm \scriptscriptstyle o}{L}{}_p^1 (\Omega_i)^*
	}^{p / (p - 1)},
	\quad
	j = 1,2,\ldots.
	\label{pt3.1.1}
\end{equation}
On the other hand,
$$
	(F, \Phi_j) 
	\le
	\| F \|_{
		\stackrel{\rm \scriptscriptstyle o}{L}{}_p^1 (M)^*
	}
	\| \Phi_j \|_{
		L_p^1 (M)
	},
	\quad
	j = 1,2,\ldots.
$$
Therefore, taking in to account the relation
$$
	\| \Phi_j \|_{
		L_p^1 (M)
	}^p
	=
	\int_M
	|\nabla \Phi_j|^p
	\,
	dV
	\le
	k^p
	\sum_{i=1}^j
	\| F \|_{
		\stackrel{\rm \scriptscriptstyle o}{L}{}_p^1 (\Omega_i)^*
	}^{p / (p - 1)}
	\int_{\Omega_i}
	|\nabla \varphi_i|^p
	\,
	dV
	=
	k^p
	\sum_{i=1}^j
	\| F \|_{
		\stackrel{\rm \scriptscriptstyle o}{L}{}_p^1 (\Omega_i)^*
	}^{p / (p - 1)},
$$
where $k$ is the multiplicity of the cover~\eqref{3.1}, we obtain
$$
	(F, \Phi_j) 
	\le
	k
	\| F \|_{
		\stackrel{\rm \scriptscriptstyle o}{L}{}_p^1 (M)^*
	}
	\left(
		\sum_{i=1}^j
		\| F \|_{
			\stackrel{\rm \scriptscriptstyle o}{L}{}_p^1 (\Omega_i)^*
		}^{p / (p - 1)}
	\right)^{1 / p},
	\quad
	j = 1,2,\ldots.
$$
Combining the last inequality with~\eqref{pt3.1.1}, we conclude that
$$
	\left(
		\sum_{i=1}^j
		\| F \|_{
			\stackrel{\rm \scriptscriptstyle o}{L}{}_p^1 (\Omega_i)^*
		}^{p / (p - 1)}
	\right)^{1 - 1 / p}
	\le
	2
	k
	\| F \|_{
		\stackrel{\rm \scriptscriptstyle o}{L}{}_p^1 (M)^*
	},
	\quad
	j = 1,2,\ldots,
$$
whence~\eqref{t3.1.1} follows in the limit as $j \to \infty$.

Now, we show that~\eqref{t3.1.1} implies the continuity of the functional $F$ in 
${\stackrel{\rm \scriptscriptstyle o}{L}\!\!{}_p^1 (M)}$.
Really, let $\varphi \in C_0^\infty (M)$. 
Since $\operatorname{supp} \varphi$ is a compact set and~\eqref{3.1} is a locally finite cover,
the support of $\varphi$ can intersect only with a finite number of domains $\Omega_i$.
Consequently,
$$
	\varphi
	=
	\sum_{i=1}^\infty
	\psi_i \varphi,
$$
where almost all terms in the right-hand side are equal to zero, whence we have
\begin{align}
	|(F, \varphi)|
	&
	{}
	\le
	\sum_{i=1}^\infty
	|(F, \psi_i \varphi)|
	\le
	\sum_{i=1}^\infty
	\| F \|_{
		\stackrel{\rm \scriptscriptstyle o}{L}{}_p^1 (\Omega_i)^*
	}
	\| \psi_i \varphi \|_{
		L_p^1 (\Omega_i)
	}
	\nonumber
	\\
	&
	{}
	\le
	\left(
		\sum_{i=1}^\infty
		\| F \|_{
			\stackrel{\rm \scriptscriptstyle o}{L}{}_p^1 (\Omega_i)^*
		}^{p / (p - 1)}
	\right)^{(p - 1) / p}
	\left(
		\sum_{i=1}^\infty
		\| \psi_i \varphi \|_{
			L_p^1 (\Omega_i)
		}^p
	\right)^{1 / p}.
	\label{pt3.1.2}
\end{align}
It is easy to see that
$$
	\| \psi_i \varphi \|_{
		L_p^1 (\Omega_i)
	}^p
	=
	\int_{\Omega_i}
	|\nabla (\psi_i \varphi)|^p
	\,
	dV
	\le
	2^p
	\int_{\Omega_i}
	|\nabla \psi_i|^p |\varphi|^p
	\,
	dV
	+
	2^p
	\int_{\Omega_i}
	\psi_i^p |\nabla \varphi|^p
	\,
	dV,
$$
whence in accordance with~\eqref{3.5} and the fact that $0 \le \psi_i^p \le \psi_i \le 1$ on $\Omega_i$ 
we obtain
$$
	\| \psi_i \varphi \|_{
		L_p^1 (\Omega_i)
	}^p
	\le
	2^p
	\int_{\Omega_i}
	\gamma |\varphi|^p
	\,
	dV
	+
	2^p
	\int_{\Omega_i}
	\psi_i |\nabla \varphi|^p
	\,
	dV,
	\quad
	i = 1,2,\ldots.
$$
Therefore,
$$
	\sum_{i=1}^\infty
	\| \psi_i \varphi \|_{
		L_p^1 (\Omega_i)
	}^p
	\le
	2^p
	k
	\int_M
	\gamma |\varphi|^p
	\,
	dV
	+
	2^p
	\int_M
	|\nabla \varphi|^p
	\,
	dV,
$$
where $k$ is the multiplicity of the cover~\eqref{3.1}. By Lemma~\ref{l3.3}, this implies the estimate
$$
	\sum_{i=1}^\infty
	\| \psi_i \varphi \|_{
		L_p^1 (\Omega_i)
	}^p
	\le
	C
	\int_M
	|\nabla \varphi|^p
	\,
	dV,
$$
where the constant $C > 0$ depends only on $p$, $k$, and the left-hand side of~\eqref{3.2}.
Thus, relation~\eqref{pt3.1.2} allows us to assert that
\begin{equation}
	|(F, \varphi)|
	\le
	C^{1/p}
	\left(
		\sum_{i=1}^\infty
		\| F \|_{
			\stackrel{\rm \scriptscriptstyle o}{L}{}_p^1 (\Omega_i)^*
		}^{p / (p - 1)}
	\right)^{(p - 1) / p}
	\left(
		\int_M
		|\nabla \varphi|^p
		\,
		dV
	\right)^{1 / p}.
	\label{pt3.1.3}
\end{equation}

Theorem~\ref{t3.1} is completely proved.
\end{proof}

\begin{proof}[Proof of Theorem~$\ref{t3.2}$]
The necessity is proved in the same way as in the case of Theorem~\ref{t3.1}.
We only note that since the manifold $M$ is $p$-parabolic, there is a sequence 
$\eta_s \in C_0^\infty (M)$ satisfying~\eqref {t2.2.2}.
This sequence also satisfies~\eqref{t2.2.1} as the existence of a solution of 
problem~\eqref{1.1}, \eqref{1.2} implies that $F$ is a continuous functional in the space
${\stackrel{\rm \scriptscriptstyle o} {L}\!\!{}_p^1 (M)}$.

We prove the sufficiency.
Let $\eta_s \in C_0^\infty (M)$
be a sequence satisfying conditions~\eqref{t2.2.1}, \eqref{t2.2.2}.
Take a Lipschitz domain $\Omega$ with compact closure such that
$K \subset \Omega$ and $\overline{\Omega}_1 \subset \Omega$.
Without loss of generality, it can be assumed that the norms
$
	\| \eta_s \|_{
		W_p^1 (\Omega)
	}
$
are bounded by a constant independent of $s$;
otherwise we replace $\eta_s$ with~\eqref{pt2.2.1}.
Since $W_p^1 (\Omega)$ is completely continuous embedded in $L_p (\Omega)$, 
there exists a subsequence of the sequence $\{ \eta_s \}_{s = 1}^\infty$
converging in ${L_p (\Omega)}$.
For this subsequence we keep the same notation $\{ \eta_s \}_{s = 1}^\infty$.
Taking into account~\eqref{t2.2.2}, one can assert that~\eqref{pt2.2.2} is valid.

We agree to denote by $C$ various positive constants depending only on
$p$, the cover~\eqref{3.1}, the partition of unity $\{ \psi_i \}_{i=1}^\infty$, 
the set $\Omega$, and the left-hand side of~\eqref{3.3}.
Let $\varphi \in {C_0^\infty (M)}$ and, moreover, $\alpha$ be the real number defined by~\eqref{pt2.2.4}.
In view of the Poincare inequality, estimate~\eqref{pt2.2.3} holds.
Also assume that $\varphi_j'$ and $\varphi_j''$ are defined by~\eqref{pt2.2.5},
where $\psi \in C_0^\infty (\Omega)$ is some function equal to one on $\Omega_1$.
For any positive integer $j$ we have
$$
	\varphi_j' 
	=
	\sum_{i=1}^\infty
	\varphi_j'
	\psi_i,
$$
where almost all terms in the right-hand side are equal to zero; therefore,
\begin{align*}
	|(F, \varphi_j')|
	&
	{}
	\le
	\sum_{i=1}^\infty
	|(F, \psi_i \varphi_j')|
	\le
	\sum_{i=1}^\infty
	\| F \|_{
		\stackrel{\rm \scriptscriptstyle o}{L}{}_p^1 (\Omega_i)^*
	}
	\| \psi_i \varphi_j' \|_{
		L_p^1 (\Omega_i)
	}
	\\
	&
	{}
	\le
	\left(
		\sum_{i=1}^\infty
		\| F \|_{
			\stackrel{\rm \scriptscriptstyle o}{L}{}_p^1 (\Omega_i)^*
		}^{p / (p - 1)}
	\right)^{(p - 1) / p}
	\left(
		\sum_{i=1}^\infty
		\| \psi_i \varphi_j' \|_{
			L_p^1 (\Omega_i)
		}^p
	\right)^{1 / p}.
\end{align*}
Thus, replacing in the arguments with which estimate~\eqref{pt3.1.3} was obtained the function $\varphi$ by $\varphi_j '$ and Lemma~\ref{l3.3} by Lemma~\ref{l3.4}, we arrive at the inequality
\begin{equation}
	|(F, \varphi_j')|
	\le
	C
	\left(
		\sum_{i=1}^\infty
		\| F \|_{
			\stackrel{\rm \scriptscriptstyle o}{L}{}_p^1 (\Omega_i)^*
		}^{p / (p - 1)}
	\right)^{(p - 1) / p}
	\left(
		\int_M
		|\nabla \varphi_j'|^p
		\,
		dV
	\right)^{1 / p}.
	\label{pt3.2.2}
\end{equation}
It is not difficult to verify that
$$
	\left(
		\int_M
		|\nabla \varphi_j'|^p
		\,
		dV
	\right)^{1 / p}
	\le
	\| 1 - \psi \|_{
		C (\Omega)
	}
	\| \varphi - \alpha \eta_j \|_{
		L_p^1 (M)
	}
	+
	\|\nabla \psi \|_{
		C (\Omega)
	}
	\| \varphi - \alpha \eta_j \|_{
		L_p (\Omega)
	}
$$
and, moreover,~\eqref{pt2.2.7} and~\eqref{pt2.2.8} are valid; 
therefore,~\eqref{pt3.2.2} implies the estimate
\begin{equation}
	\limsup_{j \to \infty}
	|(F, \varphi_j')|
	\le
	C
	\left(
		\sum_{i=1}^\infty
		\| F \|_{
			\stackrel{\rm \scriptscriptstyle o}{L}{}_p^1 (\Omega_i)^*
		}^{p / (p - 1)}
	\right)^{(p - 1) / p}
	\| \varphi \|_{
		L_p^1 (M)
	}.
	\label{pt3.2.5}
\end{equation}
Since $\operatorname{supp} \psi \subset \Omega$, the function $\varphi_j''$ can be represented as
$$
	\varphi_j''
	=
	\sum_{
		\Omega \cap \Omega_i \ne \emptyset
	}
	(\varphi - \alpha \eta_j)
	\psi
	\psi_i.
$$
We note that the family of domains $\Omega_i$ satisfying the condition $\Omega \cap \Omega_i \ne \emptyset$
is finite as $\overline{\Omega}$ is a compact set and the cover~\eqref{3.1} is locally finite.
Hence, 
$$
	|(F, \varphi_j'')|
	\le
	\sum_{
		\Omega \cap \Omega_i \ne \emptyset
	}
	|(F, (\varphi - \alpha \eta_j) \psi \psi_i)|
	\le
	\sum_{
		\Omega \cap \Omega_i \ne \emptyset
	}
	\| F \|_{
		\stackrel{\rm \scriptscriptstyle o}{L}{}_p^1 (\Omega_i)^*
	}
	\| (\varphi - \alpha \eta_j) \psi \psi_i \|_{
		L_p^1 (\Omega_i)
	}.
$$
At the same time,
$$
	\| (\varphi - \alpha \eta_j) \psi \psi_i \|_{
		L_p^1 (\Omega_i)
	}
	\le
	\| \psi \psi_i \|_{
		C (\Omega)
	}
	\| \varphi - \alpha \eta_j \|_{
		L_p^1 (\Omega)
	}
	+
	\|\nabla (\psi \psi_i) \|_{
		C (\Omega)
	}
	\| \varphi - \alpha \eta_j \|_{
		L_p (\Omega)
	},
$$
whence in accordance with~\eqref{pt2.2.7} and~\eqref{pt2.2.8} we obtain
$$
	\limsup_{j \to \infty}
	\| (\varphi - \alpha \eta_j) \psi \psi_i \|_{
		L_p^1 (\Omega_i)
	}
	\le
	C
	\| \varphi \|_{
		L_p^1 (M)
	}
$$
for all $i$ such that $\Omega \cap \Omega_i \ne \emptyset$.
Thus, one can assert that
$$
	\limsup_{j \to \infty}
	|(F, \varphi_j'')|
	\le
	C
	\sum_{
		\Omega \cap \Omega_i \ne \emptyset
	}
	\| F \|_{
		\stackrel{\rm \scriptscriptstyle o}{L}{}_p^1 (\Omega_i)^*
	}
	\| \varphi \|_{
		L_p^1 (M)
	}.
$$
By the H\"older inequality,
$$
	\sum_{
		\Omega \cap \Omega_i \ne \emptyset
	}
	\| F \|_{
		\stackrel{\rm \scriptscriptstyle o}{L}{}_p^1 (\Omega_i)^*
	}
	\le
	N^{1 / p}
	\left(
		\sum_{
			\Omega \cap \Omega_i \ne \emptyset
		}
		\| F \|_{
			\stackrel{\rm \scriptscriptstyle o}{L}{}_p^1 (\Omega_i)^*
		}^{p / (p - 1)}
	\right)^{(p - 1) / p},
$$
where $N$ is the number of domains $\Omega_i$ satisfying the condition $\Omega \cap \Omega_i \ne \emptyset$;
therefore, 
$$
	\limsup_{j \to \infty}
	|(F, \varphi_j'')|
	\le
	C
	\left(
		\sum_{
			\Omega \cap \Omega_i \ne \emptyset
		}
		\| F \|_{
			\stackrel{\rm \scriptscriptstyle o}{L}{}_p^1 (\Omega_i)^*
		}^{p / (p - 1)}
	\right)^{(p - 1) / p}
	\| \varphi \|_{
		L_p^1 (M)
	}.
$$
Combining this with~\eqref{t2.2.1}, \eqref{pt2.2.6}, and~\eqref{pt3.2.5}, we have
$$
	|(F, \varphi)|
	\le
	C
	\left(
		\sum_{i=1}^\infty
		\| F \|_{
			\stackrel{\rm \scriptscriptstyle o}{L}{}_p^1 (\Omega_i)^*
		}^{p / (p - 1)}
	\right)^{(p - 1) / p}
	\| \varphi \|_{
		L_p^1 (M)
	}.
$$
Theorem~\ref{t3.2} is completely proved.
\end{proof}

\begin{example}\label{e3.1}
Let $M$ be a subset of ${\mathbb R}^n$ of the form
$
	\{ 
		x = (x', x_n)
		: 
		|x'| \le x_n^\lambda,
		\:
		x_n \ge 0
	\}
$
with a smoothed boundary near zero,
where $n \ge 2$ and $\lambda \ge 0$ is some real number.

The manifold $M$ is $p$-hyperbolic if and only if
\begin{equation}
	n > p
	\quad
	\mbox{and}
	\quad
	\lambda > (p - 1) / (n - 1).
	\label{e3.1.1}
\end{equation}
Indeed, if at least one of the inequalities in~\eqref{e3.1.1} is not valid, then taking
$$
	\varphi_{r, R} (x)
	=
	\varphi 
	\left(
		\frac{
			\ln \frac{R}{|x|}
		}{
			\ln \frac{R}{r}
		}
	\right),
	\quad
	0 < r < R,
$$
where $\varphi \in C^\infty ({\mathbb R})$ is some function 
equal to zero in a neighborhood of $(-\infty, 0]$ and to one in the neighborhood of $[1, \infty)$,
we immediately obtain
$$
	\operatorname{cap}_p (\overline{B}_r)
	\le
	\int_M
	\left|
		\nabla \varphi_{r, R}
	\right|^p
	\,
	dV
	\to
	0
	\quad
	\mbox{as } R \to \infty
$$
for all $r > 0$, where $B_r = \{ x \in M : |x| < r \}$.
Thus, $\operatorname{cap} (M) = 0$.

On the other hand, if both inequalities in~\eqref{e3.1.1} are valid, then taking
$
	\Omega_1 
	= 
	\{ 
		x \in M
		:
		|x| < 4
	\},
$
$
	\Omega_i
	= 
	\{ 
		x \in M
		:
		2^{i - 1} < |x| < 2^{i + 1}
	\},
$
$i = 2,3,\ldots$,
and
$\gamma (x) = c (1 + |x|)^{- p}$, where $c > 0$ is enough large real number,
we can construct a partition of unity
$\psi_i \in C_0^\infty (\Omega_i)$
satisfying condition~\eqref{3.5}.
Since~\eqref{3.2} holds, Corollary~\ref{c3.1} implies that $M$ is a $p$-hyperbolic manifold.
\end{example}

\begin{example}\label{e3.2}
Let $M$ be the manifold from Example~\ref{e3.1}.
We assume that $h$ is a measure on $\partial M$ with the density $(1 + |x|)^\sigma$.
If $M$ is a $p$-hyperbolic manifold or, in other words,
inequalities~\eqref{e3.1.1} are fulfilled, then in accordance with Theorem~\ref{t3.1} 
problem~\eqref{c2.1.1}, \eqref{1.2} has a solution if and only if
$$
	\sigma 
	<
	\left\{
		\begin{aligned}
			&
			-
			\frac{\lambda n (p - 1)}{p}
			-
			(1 - \lambda)
			\left(
				2
				-
				\frac{1}{p}
			\right),
			&
			&
			\lambda < 1,
			\\
			&
			- \frac{n (p - 1)}{p},
			&
			&
			1 \le \lambda.
		\end{aligned}
	\right.
$$
Indeed, by estimates based on the embedding theorems, we can show that
$$
	\| h \|_{
		\stackrel{\rm \scriptscriptstyle o}{L}{}_p^1 (\Omega_i)^*
	}
	\asymp
	\left\{
		\begin{aligned}
			&
			2^{
				i
				(
					\sigma
					+
					\lambda n (p - 1) / p
					+
					(1 - \lambda)
					(2 - 1 / p)
				)
			},
			&
			&
			\lambda < 1,
			\\
			&
			2^{
				i
				(
					\sigma
					+
					n (p - 1) / p
				)
			},
			&
			&
			1 \le \lambda,
		\end{aligned}
	\right.
$$
where $\Omega_i$, $i = 1,2,\ldots$, is the cover constructed in Example~\ref{e3.1}.

Let us note that, for $p$-parabolic manifold $M$, problem~\eqref{c2.1.1}, \eqref{1.2} has no solutions
for any $\sigma$ as condition~\eqref{t2.2.1} is not fulfilled.
\end{example}


\end{document}